\DeclareMathOperator{\ddiv}{div}
\newcommand{\bmz}[1]{\ensuremath{#1}}
\newtheorem{theorem}{Theorem}
\newtheorem{corollary}{Corollary}[theorem]
\newdefinition{remark}{Remark}
\journal{}
\def\ps@pprintTitle{%
  \let\@oddhead\@empty
  \let\@evenhead\@empty
  \let\@oddfoot\@empty
  \let\@evenfoot\@oddfoot
}
\begin{document}

\begin{frontmatter}

\title{Oscillation-free numerical schemes for Biot's model and their iterative coupling solution} 
\author[UniZar]{\'Alvaro P\'e de la Riva}\ead{apedelariva@unizar.es}
\address[UniZar]{IUMA and Applied Mathematics Department,
University of Zaragoza, Zaragoza, Spain}
\author[UniZar]{Francisco J. Gaspar}\ead{fjgaspar@unizar.es}
\author[Tufts]{Xiaozhe Hu}\ead{xiaozhe.hu@tufts.edu}
\address[Tufts]{Tufts University, Medford, Massachusetts, USA}
\author[Tufts]{James H. Adler}\ead{james.adler@tufts.edu}
\address[Tufts]{Tufts University, Medford, Massachusetts, USA}
\author[UniZar]{Carmen Rodrigo\corref{cor1}}\ead{carmenr@unizar.es}
\author[NSF]{Ludmil T. Zikatanov}\ead{lzikatan@nsf.gov}
\address[NSF]{U.~S.~National Science Foundation, Alexandria, Virginia, USA}
\cortext[cor1]{Corresponding author. Tel.: +34 976762148; E-mail address: carmenr@unizar.es (C. Rodrigo)}


\begin{abstract}
In this work, we present a new stabilization method aimed at removing spurious oscillations in the pressure approximation of Biot's model for poroelasticity with low permeabilities and/or small time steps. We consider different finite-element discretizations and illustrate how not only does such a stabilized scheme provide numerical solutions that are free of non-physical oscillations, but it also allows one to iterate the fluid and mechanics problems in a fashion similar to the well-known fixed-stress split method. The resulting solution method is convergent without the necessity for additional terms to stabilize the iteration. Finally, we present numerical results illustrating the robust behavior of both the stabilization and iterative solver with respect to the physical and discretization parameters of the model.
\end{abstract}

\begin{keyword}
Biot's model, poromechanics, spurious oscillations, iterative coupling methods.
\end{keyword}

\end{frontmatter}

\nolinenumbers

\section{Introduction}\label{sec:intro}

The coupling of fluid flow and mechanical deformation within a porous media plays an important role in many relevant applications like geothermal energy extraction, CO2 storage, hydraulic fracturing, and cancer research, among others. 
Such coupling was modeled in the early one-dimensional work of Terzaghi \cite{terzaghi}, whereas the general three-dimensional mathematical formulation was established by Maurice Biot in several pioneering publications \cite{biot1, biot2}. 
There is a vast literature on Biot’s equations and the existence, uniqueness, and regularity of their solutions (see
Showalter~\cite{showalter}, Phillips and Wheeler~\cite{P-W}, and the references therein). 
Nowadays, the analysis and numerical simulation of Biot’s models has become of great interest to scientists and engineers, since reliable numerical methods for solving poroelastic problems are needed for the accurate solution of multi-physics phenomena appearing in a great variety of fields. Biot’s models are used today in different areas ranging
from geomechanics and petroleum engineering, where these models have been applied ever since their discovery, to biomechanics or even food processing more recently. 

Analytical solutions have been derived in the literature for some linear poroelasticity problems (see~\cite{coussybook}), including some that are obtained artificially~\cite{Barry}. Numerical simulations, however, are the only way to obtain quantitative results for real applications and, therefore, the main focus is twofold: first, to choose appropriate discretization schemes that provide numerical solutions whose behavior resembles that of the real problems and, second, to design efficient solvers for the resulting discrete systems. Related to this, it is important to take into account that in many physical applications, the values of some of the parameters appearing in the model may vary over orders of magnitude. For instance, the permeability can typically range from $10^{-9}$ to $10^{-21}$ m$^2$ in geophysical applications~\cite{lee_mardal_winther, wangbook}, and from $10^{-14}$ to $10^{-16}$ m$^2$  in biophysical applications such as in the modeling of soft tissue or bone~\cite{hatira_et_al, Smith_et_al, Mardal_ap_et_al}. Also the Poisson ratio can vary from $0.1$ to $0.5$ in these applications. Due to the large variation of values of these physical parameters, it is important to consider discretizations that are stable, independently of the parameters, and solvers that behave well under such variations. These types of numerical schemes are called parameter-robust methods and have been the focus of the scientific community for the past few years (see for example~\cite{robust_prec, robust_kraus, lee_mardal_winther, preconditioners_sema, New_discret}).

Various discretization schemes for the numerical solution of Biot's model have been developed, including using finite-difference schemes~\cite{Gaspar2003,Gaspar2006} and finite-volume methods (see~\cite{Nordbotten2014,Nordbotten2016} for example). Finite-element methods, which are the subject of this work, have also been widely considered (see for example the monograph by Lewis and Schrefler~\cite{LewisSchrefler} and the references therein and some other more recent works~\cite{jeonghun_lee, lee2023locking, New_discret, son_young_yi_2013}). Problems where the solution is smooth are satisfactorily solved by standard finite-element discretizations. However, when strong pressure gradients appear, it is well-known that these standard schemes often exhibit non-physical oscillations in the numerical approximations to the fluid pressure \cite{Stab_carmen_francisco,Favino2013,ferronato2010,haga_et_al,WheelerPhillip,son_young_yi}. 
This phenomenon may appear at the beginning of the consolidation process when a load is applied on a drain boundary, but also can occur when low permeabilities are considered.
The oscillations can be eliminated by using very fine grids, or when imposing stability restrictions between the space and time discretization parameters (see~\cite{Stab_carmen_francisco, Vermeer_Verruijt}). 
However, such techniques can result in prohibitively expensive computational methods and are not practical.
Instead, the non-physical, oscillatory behavior of the numerical solution can be minimized if approximation spaces for the vector and scalar fields satisfy an appropriate inf-sup condition 
(see~\cite{1974BrezziF-aa}). 
Such discretizations have been analyzed by Murad et al. in~\cite{MuradLoula92, MuradLoula94, MuradLoulaThome}. 
The inf-sup condition, however, does not guarantee numerical solutions free of oscillations. 
This is seen in~\cite{Stab_carmen_francisco}, where Taylor-Hood elements~\cite{Taylor_Hood} 
(continuous piecewise quadratic functions for the approximation of the displacement and 
continuous piecewise linear functions for the pressure) are considered, 
and in~\cite{2016RodrigoGasparHuZikatanov-a} when using the so-called MINI element~\cite{mini}. 
Other numerical schemes to address the oscillatory behavior in the pressure, 
such as least-squares mixed finite-element methods, are proposed in~\cite{korsawe, Tchonkova} for a four-field
formulation of the problem (displacement, stress, fluid flux and pressure unknowns). 
Also, different combinations of continuous and discontinuous Galerkin methods 
and mixed finite-element methods for a three-field formulation are studied in~\cite{P-W, P-W_2, P-W_3}. 
In~\cite{nonconforming}, a nonconforming finite-element method for the three-field formulation of 
Biot’s model is proposed. Piecewise constant elements for the pore pressure paired with the lowest order 
Raviart–Thomas-Nédélec elements for the Darcy’s velocity and the nonconforming Crouzeix–Raviart elements 
for the displacements were considered.  There, the use of mass lumping to eliminate the Darcy velocity
 led to an oscillation-free approximation of the pressure. Additionally, a novel three-field formulation 
 based on displacement, pressure, and total pressure was proposed in~\cite{oyarzua} with error estimates 
 independent of the Lam\'e constants, yielding a locking-free approach. Another strategy to eliminate 
 the non-physical oscillations of the numerical solution of the pressure is to add certain stabilization 
 terms to the Galerkin formulation of the problem. In~\cite{Berger_et_al}, conforming linear finite elements 
 with stabilization for the three-field problem were proposed and analyzed. For the two-field formulation 
 of the problem, this strategy of adding a stabilization term was applied 
 in~\cite{Stab_carmen_francisco, 2016RodrigoGasparHuZikatanov-a} to provide a stable scheme 
 by using linear finite-element approximations for both unknowns (P1-P1) or the classical MINI element scheme. 
 In these latter works, a stabilization by adding the term $\xi h^2 (\nabla \bar{\partial}_t{p_h^n},\nabla q_h)$ to the flow equation, with $\bar{\partial}_t{p_h^n} := (p_h^n - p_h^{n-1})/\tau$, was proposed and theoretically studied. Such a stabilization term, depending on the solid's elastic properties and the mesh size, was given {\it a priori}. Its optimality was shown in the one-dimensional case. This scheme provided solutions without oscillations independently of the discretization parameters, 
but was developed for problems where the storage coefficient $1/\beta$ is close to zero. 

In this work, we propose a novel stabilization, 
which, in the one-dimensional case, is identical to the one presented 
in~\cite{Stab_carmen_francisco, 2016RodrigoGasparHuZikatanov-a} and, in the multi-dimensional, case has several 
advantages. Firstly, the stabilization parameter 
does not depend on the mesh size (which could vary from element to element). 
Secondly, our approach allows alternating iterations between the fluid and the mechanic's problems. 
This is similar to iterative coupling methods, such as the fixed-stress splitting method 
(see for example \cite{Kim2}), but without the additional stabilization needed  
to obtain convergence. 
Moreover, with the aim to obtain parameter-robust methods, 
we are able to extend the approach to the more general case when 
the storage coefficient $1/\beta$ is not close to zero
(see Section~\ref{sec:two_parameters}).

In all the cases above, typical discretization schemes result in a large-scale 
linear system of equations that has to be solved at each time step. 
Such linear systems are usually ill-conditioned and difficult to solve in practice.
 In fact, the solution of the large linear systems of equations arising from the discretization of 
 Biot’s model is the most time-consuming part when real simulations are performed. 
 For this reason, significant effort has been focused on designing efficient iterative solution 
 methods for these problems. There are mainly two 
 approaches that can be adopted: so-called monolithic or fully-coupled methods and iterative coupling methods. 
 Monolithic approaches solve the linear system simultaneously for all the unknowns, 
 and the focus of these is usually in the design of efficient preconditioners to 
 accelerate the convergence of Krylov subspace 
 methods or in the design of appropriate smoothers in a multigrid setting. 
 Recent advances in these directions are found 
 in~\cite{adler_et_al_mg, BERGAMASCHI20072647, ferronato_prec, gaspar_et_al_mg, peiyao}, 
 and the references therein. 
On the other hand, iterative coupling methods solve sequentially the 
equations for fluid flow and geomechanics, at each time step, until a converged solution within a 
prescribed tolerance is achieved. They offer several attractive features, such as their flexibility, 
since two different existing codes can be used for solving the whole coupled poroelastic problem, one for 
the fluid flow and one for the geomechanics.  The most commonly used iterative coupling methods 
are the drained and undrained splittings, which solve the mechanical problem first, and the 
fixed-strain and fixed-stress splittings, which solve the flow problem first~\cite{KIM1, Kim2}. 
The fixed-stress splitting method has been widely 
analyzed~\cite{BORREGALES20191466, BOTH2017101, castelleto_et_al_fs, GASPAR2017526, mikelic_wheeler, storvik}. 
In this work, as a result of the proposed stabilized scheme, 
we obtain a new sequential-implicit method, which is similar to that of the fixed-stress splitting method, 
but with the advantage that no additional stabilization of the iteration is required for convergence.

The structure of the paper is as follows. In Section~\ref{sec:mod_disc}, 
we introduce Biot's model and the discrete schemes considered in this work. 
Section~\ref{sec:stab} describes the new stabilization strategy, demonstrating its uniform stability. 
In Section~\ref{sec:4}, the iterative coupling method arising from the new stabilized scheme is presented, 
and its robust convergence is proved. Section~\ref{sec:1dchoice} motivates the choice of the tuning parameter 
that improves the convergence of the proposed solver, which is done by considering the one-dimensional 
Terzaghi problem. Section~\ref{sec:numerical} demonstrates the good behavior of both the proposed 
stabilization scheme and the corresponding iterative coupling method for two benchmark problems in 
poroelasticity in both two and three dimensions. 
Section~\ref{sec:two_parameters} extends the proposed solution strategy to the more general case in 
which the storage coefficient $1/\beta$ is not close to zero, 
and conclusions are drawn in Section~\ref{sec:conclusions}.

\section{Biot's model: Mathematical model and discretization}\label{sec:mod_disc}

We begin with a brief description of the quasi-static Biot’s model. For a more detailed
explanation about the derivation of the governing equations and the mathematical model, we refer the reader to the books by
Wang~\cite{wangbook} and Coussy~\cite{coussybook}. In addition, we introduce two finite-element schemes that are considered for the spatial discretization of the problem, namely a P1-P1 finite-element scheme and the MINI element discretization.

\subsection{Mathematical model}\label{sec:math_model}

In this work, we consider the quasi-static Biot's model for poroelasticity. We assume that the porous medium is linearly elastic, homogeneous, isotropic, and saturated by a slightly incompressible Newtonian fluid. Under these assumptions, the displacement-pressure formulation for the model is given by
\begin{eqnarray}
& & - \mathbf{\nabla} \left(2\mu \varepsilon(\mathbf{u}) + \lambda \mathbf{\nabla} \cdot \mathbf{u} \right) + \alpha \mathbf{\nabla} p = \mathbf{f}, \label{biot1} \\ 
& &  \frac{1}{\beta} \partial_t p + \partial_t\left(\alpha \mathbf{\nabla} \cdot \mathbf{u} \right) - \nabla \cdot \left(K (\nabla p - \rho_f \mathbf{g}) \right) = S_f, \label{biot2} 
\end{eqnarray}
on a space-time domain $\Omega \times (0,T_f]$, where $\Omega \subset \mathbb{R}^d$, $d \leq 3$, $T_f > 0$. In~\eqref{biot1}-\eqref{biot2}, $\mu$ and $\lambda$ are the Lam\'e parameters, which can be also computed via the Young's modulus $E$ and the Poisson ratio $\nu$ as $\mu =\displaystyle \frac{E}{2(1 + \nu)}$ and $\lambda = \displaystyle\frac{E\nu}{(1-2\nu)(1+\nu)}$. The main variables are $\mathbf{u}$ as the displacement vector and $p$ as the fluid pressure. Here, $\varepsilon(\mathbf{u}) = \frac{1}{2} (\mathbf{\nabla} \mathbf{u} + \mathbf{\nabla}^T \mathbf{u})$ is the linearized strain tensor, $\alpha$ is the Biot cofficient, $\beta$ is the Biot modulus, $\rho_f$ is the fluid density, $\mathbf{g}$ is the gravity tensor,  $K $ is the hydraulic conductivity, and $S_f$ is a source term. 
For simplicity, we assume homogeneous Dirichlet boundary conditions for both displacements and pressure on $\partial \Omega \times (0,T_f]$, and initial conditions $\mathbf{u}(\mathbf{x},0) = \mathbf{u_0} (\mathbf{x})$,
$p(\mathbf{x},0) = p_0 (\mathbf{x})$. 
The existence and uniqueness of the solution for these problems have been investigated by Showalter in~\cite{showalter} and by Zenisek in~\cite{zenisek}.  Finally, we note that in most applications, $1/ \beta$ is small, even very close to zero. While we consider this regime for most of this work, in Section~\ref{sec:two_parameters} we generalize the proposed approach to the case in which $1/\beta$ is not close to zero.

We consider the Sobolev spaces ${\mathbf V} = (H^1_0(\Omega))^d$ and $Q = H^1_0(\Omega)$, where $H^1_0(\Omega)$ denotes the Hilbert subspace of $L^2(\Omega)$ of functions with first weak derivatives in $L^2(\Omega)$ that are zero on the boundary of $\Omega$. We use $(\cdot,\cdot)$ to denote the standard inner product in $L^2(\Omega)$, and define the following bilinear forms,
\begin{eqnarray*}\label{bilinear}
a(\bm{u},\bm{v}) = 2\mu \int_{\Omega}{ \varepsilon}(\bm{u}):{ \varepsilon}(\bm{v}) \, {\rm d} \Omega +
\lambda\int_{\Omega} \ddiv\bm{u}\ddiv\bm{v} \, {\rm d} \Omega, \qquad 
a_p(p,q) =  \int_{\Omega} K \nabla p \cdot \nabla q \, {\rm d} \Omega,
\end{eqnarray*}
corresponding to the elasticity and scaled Laplacian operators, respectively. The variational formulation for the two-field formulation of Biot's model is now: Find $({\mathbf u}(t), p(t))\in {\mathcal C}^1([0,T_f]; {\mathbf V}) \times {\mathcal C}^1([0,T_f];Q)$ such that 
\begin{eqnarray}
  && a(\bm{u},\bm{v}) -\alpha (p, \ddiv \bm{v})  
  = (\bm{f},\bm{v}),
     \quad \forall \  \bm{v} \in \bm V, \label{variational1}\\
  &&  \frac{1}{\beta} (\partial_t{p},q) + \alpha(\ddiv \partial_t{\bm{u}},q)  + a_p(p,q)  = (g,q),
   \; \forall \ q \in Q.\label{variational2}
\end{eqnarray}

\subsection{Discretization}\label{sec:discret}
Let $\mathcal{T}_h$ be a partition of $\Omega\subset \mathbb{R}^d$ consisting of triangles ($d=2)$ or tetrahedrons ($d=3$). For the spatial discretization of Biot's model, we choose
a finite-element pair of spaces ${\bf V}_h \times Q_h$ to approximate the displacements and the pressure, respectively.  For the pressure, we use piecewise linear finite elements, i.e.,
$$
 Q_h = \{ p_h \in H_0^1(\Omega) \, | \, p_h |_T \in P_1, \; \forall T \in \mathcal{T}_h \}.
$$
For the displacements, two different spaces are considered. The first choice, ${\bf V}_h = {\bf V}_l$, consists of the space of piecewise linear continuous vector-valued functions on $\Omega$, i.e.,
$$
{\bf V}_l = \{ {\bm v}_h \in (H_0^1(\Omega))^d \, | \, {\bm v}_h |_T \in (P_1)^d, \; \forall T \in \mathcal{T}_h \}.
$$
The second choice, is the MINI-element~\cite{mini} for which we have ${\bf V}_h = {\bf V}_l \oplus  {\bf V}_b$,  where  ${\bf V}_b$ is the space of element bubble functions.   As a time discretization, we use the backward Euler method on a uniform partition of the time interval $(0, T_f]$, $t_j=j\tau$, $j=0,\ldots, N$, with time-step $\tau=\frac{T_f}{N}$.

For given initial values $({\bf u}_h^0,p_h^0) \in {\bf V}_h \times Q_h$, we have the following fully discrete scheme: for $n=1,2, \ldots, N$, find $({\bf u}_h^n,p_h^n) \in {\bf V}_h \times Q_h$ such that
\begin{eqnarray}
  && a(\bm{u}_h^n,\bm{v}_h) -\alpha (p_h^n, \ddiv \bm{v}_h) 
  = (\bm{f}_h^n,\bm{v}_h),
     \quad \forall \  \bm{v}_h \in \bm V_h, \label{total_discrete_variational1}\\
  &&  \frac{1}{\beta} (\bar{\partial}_t{p_h^n},q_h) + \alpha(\ddiv \bar{\partial}_t{\bm{u}_h^n},q_h)  + a_p(p_h^n,q_h)  = (g_h^n,q_h),
   \; \forall \ q_h \in Q_h,\label{total_discrete_variational2}
\end{eqnarray}
where $\bar{\partial}_t{p_h^n} := (p_h^n - p_h^{n-1})/\tau$ and $\bar{\partial}_t{\bm{u}_h^n} := (\bm{u}_h^n - \bm{u}_h^{n-1})/\tau$. 
We then have the following block form of the discrete problem~\eqref{total_discrete_variational1}-\eqref{total_discrete_variational2}:
\begin{equation}\label{eq:op}
\mathcal{A}\;
\begin{pmatrix}
\bm{u}\\
 p
\end{pmatrix} =
\begin{pmatrix}
\bmz{f} \\
g
\end{pmatrix}, \quad\mbox{with}\quad
\mathcal{A}=\begin{pmatrix}
A& G\\
D &  \tau A_p + \beta^{-1} M
\end{pmatrix},\;
\end{equation}
where operators $A$, $G$, $D$, $A_p$, and $M$ correspond to the
following bilinear forms for $\bm u_h, \bm v_h \in \bm{V}_h$ and $p_h,q_h\in Q_h$:
$$
a(\bm u_h,\bm v_h)\! \rightarrow \! A,  \, 
-\alpha ( p_h, \ddiv \bm v_h) \!  \rightarrow \! G,   \,
\alpha (\ddiv {\bf u}_h, q_h) \!  \rightarrow \! D,  \, a_p( p_h,q_h) \! \rightarrow \! A_p,  \, ( p_h,q_h) \! \rightarrow \! M.
$$
Note that $D = -G^T$ by definition.

\section{On the stability of the proposed method} \label{sec:stab}

The novel stabilization proposed in this work consists of adding two terms to the flow equation and leads to the following discrete variational formulation:
\begin{eqnarray}
  && a(\bm{u}_h^n,\bm{v}_h) -  \alpha(p_h^n,\ddiv \bm{v}_h) = (\bm {f}_h^n,\bm{v}_h),
     \quad \forall \bm{v}_h \in \bm V_h, \label{stab1}\\
  &&  \frac{1}{\beta} (\bar{\partial}_t{p_h^n},q_h) +  \alpha(\ddiv \bar{\partial}_t{\bm{u}_h^n},q_h)  + a_p(p_h^n,q_h)  + L (\bar{\partial}_t{p}_h^n,q_h)_0  - L (\bar{\partial}_t{p}_h^n,q_h) \label{stab2} \\  
  && \quad = (g_h^n,q_h), \forall q_h \in Q_h, \nonumber
\end{eqnarray}
where $(\cdot,\cdot)_0$ is an approximation of the $L^2(\Omega)$ inner product defined by mass lumping, {\it i.e.}, for continuous functions $p$ and $q$ defined on $\bar{\Omega}$, we have
$$
(p,q)_0 = \sum_{T\in\mathcal{T}_h} \int_T(pq)_I\,d\bm{x} = \sum_{T\in\mathcal{T}_h} \frac{|T|}{d+1}\sum_{j=1}^{d+1} (pq)(P_{T,j}), 
$$
where $(pq)_I$ denotes the linear interpolant of the continuous function $(pq)$ and $P_{T,j}$ are the coordinates of the $j$-th vertex of $T\in\mathcal{T}_h$.
In \eqref{stab2}, $L$ is a parameter appropriately chosen to remove the non-physical oscillations. We elaborate on its choice later, depending on the discretization considered for the displacements. 
We now have the following block
form of the discrete problem~\eqref{stab1}-\eqref{stab2}:
\begin{equation*}
\mathcal{A}_{stab}\;
\begin{pmatrix}
\bm{u}\\
 p
\end{pmatrix} =
\begin{pmatrix}
\bmz{f} \\
g
\end{pmatrix}, \quad\mbox{with}\quad
\mathcal{A}_{stab}=\begin{pmatrix}
A& G\\
D &  C 
\end{pmatrix}\;,
\end{equation*}
where $C = \tau A_p + \beta^{-1} M + L(M_l- M)$, with $M$ and $M_l$ the mass matrix and the lumped mass matrix, respectively.

To prove the new scheme's stability, we show a relationship between the technique introduced in~\cite{2016RodrigoGasparHuZikatanov-a} and the proposed stabilization, which is defined by the difference between the lumped mass matrix $M_l$ and the mass matrix $M$ on a simplicial grid. By a straightforward calculation, we have the following conditions for piecewise linear continuous functions $p$ and $q$,
\begin{eqnarray*}
   && \langle M p,q\rangle = \sum_{T\in\mathcal{T}_h} \int_Tpq\,d\bm{x},\quad 
    \langle M_l p,q\rangle = \sum_{T\in \mathcal{T}_h} \int_T(pq)_I\,d\bm{x}, \\
   && \langle Zp,q \rangle:= \langle (M_l - M) p,q\rangle = \sum_{T\in\mathcal{T}_h} \langle Z_T p_T,q_T\rangle.
\end{eqnarray*}
For a fixed $T\in\mathcal{T}_h$, $\langle Z_Tp_T,q_T\rangle$ is the difference between the integrals in the definitions of $M_l$ and $M$ and $p_T \in \mathbb{R}^{d+1}$ is the vector that represents the degrees of freedom of $p(x)$ with components $\{p_{T,k}\}_{k=1}^{d+1}$. Evaluating the integrals on $T\in \mathcal{T}_h$ and using the Poincar\'e inequality for convex domains~\cite[Theorem 3.2]{2003Bebendorf-a}, we have that
\begin{equation*}
  \frac{1}{d+1}\langle Z_T p_T,p_T\rangle = \int_T\left(p-\frac{1}{|T|}\int_{T} p\,d\bm{x}\right)^2\,d\bm{x}\le \frac{h_T^2}{\pi^2}\int_T|\nabla p|^2\,d\bm{x}.
\end{equation*}
On the other hand, from \cite[Equations~(2.3)-(2.4)]{1999XuZikatanov-a}, we obtain the bound
\begin{equation}
  h_T^2\int_T |\nabla p|^2\,d\bm{x} \le
  \frac{c_T|T|}{2}\sum_{j=1}^{d+1}\sum_{k=1}^{d+1}(p_{T,j}-p_{T,k})^2=c_T(d+1)(d+2)\langle Z_T p_T,p_T\rangle,
  \label{ineq2}
\end{equation}
where $c_T :=\displaystyle\max_{\substack{1\le j,k \le(d+1)\\ j\neq k}}\left\{\frac{h_T^2}{|h_j||h_k|}\right\}$. 
Summing over all elements $T\in \mathcal{T}_h$ shows that $Z = (M_l-M)$ is spectrally equivalent to the scaled stiffness matrix $h^2 L_p$ (here we define $\langle h^2 L_p p, q \rangle := \sum_{T \in \mathcal{T}} h_T^2 \int_{T} |\nabla p|^2\,d\bm{x}$), i.e., 
\begin{equation}\label{ine:spec-equiv}
C_1 \langle Z p, q \rangle \leq \langle h^2L_p p, q \rangle \leq C_2 \langle Z p, q \rangle.
\end{equation}
Hence, an inf-sup condition similar to the one shown in~\cite[Theorem 1]{2016RodrigoGasparHuZikatanov-a} holds.  

In the one-dimensional case, inequality \eqref{ineq2} turns into an equality, and, hence, $h^2 L_p = 6(M_l -M)$. In~\cite{2016RodrigoGasparHuZikatanov-a}, the minimum stabilization providing monotone discretizations in the one-dimensional case was shown to be $\xi h^2 L_p$, with $\xi = \alpha^2/(4(\lambda+2\mu))$ for P1-P1 and $\xi = \alpha^2/(6(\lambda+2\mu))$ for the MINI element. Thus, choosing parameter $L$ in \eqref{stab2} as 
$$L = \displaystyle \left\{ \begin{array}{ll} \displaystyle \frac{3\alpha^2}{2(\lambda+2\mu/d)}, & \hbox{for P1-P1,} \\  \displaystyle \frac{\alpha^2}{\lambda+2\mu/d}, & \hbox{for MINI-element,}  \end{array}\right.$$
we then obtain a new stabilized scheme that is identical, as expected, to the one given in~\cite{2016RodrigoGasparHuZikatanov-a} for one-dimensional problems. For two and three spatial dimensions, it was observed numerically in~\cite{2016RodrigoGasparHuZikatanov-a} that adding the term  $\xi h^2 L_p$  to the stabilization leads to monotone schemes, though no theoretical results exist. 
As we noted earlier, the stabilization parameter $L$ does not depend on the mesh size. In addition, as it is shown in next section, our scheme also gives a novel fixed-stress splitting method. 

\section{On the convergence of the iterative coupling method}
\label{sec:4}
A popular approach for iteratively solving Biot's equations is first to solve the flow problem and then the mechanics, and iterate until a converged solution is obtained. This results in the so-called fixed-stress splitting method. In order to achieve convergence of the fixed-stress iteration, the artificial term, $L (\bar{\partial}_t{p}_h^n,q_h)$, has to be added to both sides of the flow equation, choosing parameter $L$  large enough. In our case, similar terms have already been added to the flow equation in the discrete problem, \eqref{stab1}-\eqref{stab2}, aiming to remove the unphysical oscillations.  We show that this is enough to ensure convergence of the resulting iterative strategy.
However, to optimize convergence of the scheme, we introduce a new parameter $\gamma > 0$ in the flow equation. 
Therefore, at each time step, $t_n$, we propose the following iterative method for the solution of \eqref{stab1}-\eqref{stab2}.  Given the initial guess for the coupling iteration at time $t_n$, ${\bm u}_h^{n,0} = {\bm u}_h^{n-1}$ and $p_h^{n,0} = p_h^{n-1}$, the algorithm provides a sequence of approximations $({\bm u}_h^{n,i}$ and $p_h^{n,i}), i \geq 1$, as follows: \\
\noindent {\bf Step 1:} Given $({\bm u}_h^{n,i-1},p_h^{n,i-1}) \in {\bm V}_h \times Q_h$, find $p_h^{n,i} \in Q_h$ such that:
\begin{eqnarray}  
  &&  \frac{1}{\beta} \left(\frac{p_h^{n,i}-p_h^{n-1}}{\tau},q_h\right) + \gamma L \left(\frac{p_h^{n,i}-p_h^{n-1}}{\tau},q_h\right)_0 + a_p(p_h^{n,i},q_h) \label{it_pressure2}\\
  && = -\alpha \left(\ddiv \frac{\bm{u}_h^{n,i-1}-\bm{u}_h^{n-1}}{\tau},q_h\right) +  L \left(\frac{p_h^{n,i-1}-p_h^{n-1}}{\tau},q_h\right)  \nonumber  \\ 
  && \qquad + (\gamma-1) L \left(\frac{p_h^{n,i-1}-p_h^{n-1}}{\tau},q_h\right)_0 +  (g_h^n,q_h), \quad \forall \ q_h \in Q_h, \nonumber 
\end{eqnarray}
{\bf Step 2:} Given $p_h^{n,i} \in Q_h$, find ${\bm u}_h^{n,i} \in {\bm V}_h$ such that
\begin{equation}
a(\bm{u}_h^{n,i},\bm{v}_h) =  \alpha( p_h^{n,i}, \ddiv \bm{v}_h) + (\bm{f}_h^n,\bm{v}_h),
     \quad \forall \  \bm{v}_h \in \bm V_h. \label{it_displacement2}
\end{equation}

\noindent In matrix form, the iterative method is based on the following splitting of $\mathcal{A}_{stab}$,
\begin{equation}\label{splitting_gamma}
\mathcal{A}_{stab} = 
\begin{pmatrix}
A& G\\
0 &  \tau A_p + \beta^{-1} M + \gamma L M_l
\end{pmatrix}\;
-\begin{pmatrix}
0& 0\\
-D &  L M + (\gamma-1) L M_l
\end{pmatrix}.
\end{equation}

Next, we prove the convergence of the iterative method for a set of values of parameter $\gamma$.
In particular, we demonstrate that algorithm \eqref{it_pressure2}-\eqref{it_displacement2}  is convergent for any parameter $\gamma \in (1/2,2]$ and $L$ such that $\gamma L$ is big enough. Additionally, the constants arising from the analysis are independent of any physical and discretization parameters, showing the robustness of the proposed iterative scheme. 

Let $e_u^i = \bm{u}_h^{n,i} - \bm{u}_h^n$ and $e_p^i = p_h^{n,i}-p_h^n$ denote the errors at iteration $i$  for the displacements and for the pressure, respectively.  Moveover, we assume the following weak inf-sup condition,
\begin{equation}\label{ine:weak-inf-sup}
	\sup_{\bm{w}_h \in \bm{V}_h} \frac{(q, \ddiv \bm{w}_h)}{\| \bm{w}_h  \|_A} \geq \eta \frac{1}{\sqrt{\lambda + 2\mu/d}} \| q \| - \epsilon \frac{1}{\sqrt{\lambda + 2\mu/d}} h \| \nabla q \|,   \quad \forall \ q \in Q_h,
\end{equation}
where $\eta >0$ and $\epsilon > 0$ are constants that do not depend on the mesh size or the physical parameters.  Both P1-P1 and the MINI-element satisfy~\eqref{ine:weak-inf-sup} (see \cite{2016RodrigoGasparHuZikatanov-a,adler2018robust,preconditioners_sema}), with the MINI-element satisfying a stronger version with $\epsilon = 0$. 
\begin{theorem} \label{thm:robust-convergence-one-gamma}
If the weak inf-sup condition \eqref{ine:weak-inf-sup} holds, the iterative method given in \eqref{it_pressure2}-\eqref{it_displacement2} 
is convergent for any parameters $\gamma \in (1/2,2]$ and $L$ such that  $\gamma L = \omega \frac{\alpha^2}{ (\lambda+2\mu/d)} \geq \frac{\alpha^2}{(\lambda+2\mu/d)}$, i.e., $\omega \geq 1$. Additionally,
\begin{equation}
\label{thm_contraction_pressure_3}
\| e_p^i \|^2  + \frac{|1-\gamma|}{\gamma} \| e_p^i \|_Z^2   \leq  \frac{1}{ 1+  \frac{\eta^2}{\omega (1 + 2  \theta^*) } } \left(  \| e_p^i \|^2 + \frac{|1-\gamma|}{\gamma}  \| e_p^{i-1} \|_Z^2  \right),
\end{equation}
where $\theta^* \geq \frac{\epsilon^2C_2 \gamma}{4 \omega(\gamma - |1-\gamma|)}$ is a root of the following quadratic equation
\begin{equation} \label{def:q2}
	q_2(\theta):=
	\left[ \frac{2(\gamma-|1-\gamma|)}{\gamma}  \right] \theta^2 + \left[ \frac{\gamma-|1-\gamma|}{\gamma} - \frac{\eta^2 |1-\gamma|}{2 \omega \gamma} - \frac{\epsilon^2 C_2}{2\omega} \right] \theta - \frac{\epsilon^2 C_2}{4 \omega } = 0.
\end{equation}
Here, $\eta > 0$ is the constant appearing in the weak inf-sup condition~\eqref{ine:weak-inf-sup}, and $C_2$ is the constant for the upper bound of the spectral equivalence condition~\eqref{ine:spec-equiv}.
\end{theorem}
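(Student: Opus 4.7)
The plan is to prove contraction of the combined functional $\Phi(q):=\|q\|^2+\tfrac{|1-\gamma|}{\gamma}\|q\|_Z^2$ with rate $(1+\rho)^{-1}$, where $\rho=\eta^2/[\omega(1+2\theta^*)]$.  First, I would derive the error equations.  Rearranging \eqref{stab2} into the functional form of \eqref{it_pressure2} (by adding and subtracting $(\gamma-1)L(\bar{\partial}_t p_h^n,q_h)_0$) and subtracting, while exploiting that the initialization $p_h^{n,0}=p_h^{n-1}$, $u_h^{n,0}=u_h^{n-1}$ collapses every discrete time difference of the errors into $e_p^i/\tau$ and $e_u^{i-1}/\tau$, then multiplying by $\tau$ and using $(\cdot,\cdot)_0=(\cdot,\cdot)+(\cdot,\cdot)_Z$, gives
\[
\bigl(\tfrac{1}{\beta}+\gamma L\bigr)(e_p^i,q_h)+\gamma L(e_p^i,q_h)_Z+\tau a_p(e_p^i,q_h)=-\alpha(\ddiv e_u^{i-1},q_h)+\gamma L(e_p^{i-1},q_h)+(\gamma-1)L(e_p^{i-1},q_h)_Z,
\]
together with $a(e_u^i,v_h)=\alpha(e_p^i,\ddiv v_h)$ from the displacement step.

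Next I would test with $q_h=e_p^i$, substitute $\alpha(\ddiv e_u^{i-1},e_p^i)=a(e_u^{i-1},e_u^i)$ via the displacement identity, polarize all three cross products, and multiply by two.  The resulting $-\|e_u^i-e_u^{i-1}\|_A^2$ is controlled by taking $v_h=e_u^i-e_u^{i-1}$ in the difference of consecutive displacement equations together with the elementary estimate $\|\ddiv v_h\|^2\le(\lambda+2\mu/d)^{-1}\|v_h\|_A^2$, yielding $\|e_u^i-e_u^{i-1}\|_A^2\le(\gamma L/\omega)\|e_p^i-e_p^{i-1}\|^2$, which for $\omega\ge1$ is absorbed by the companion $\gamma L\|e_p^i-e_p^{i-1}\|^2$ already on the left.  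To extract $\|e_p^i\|^2$ from $\|e_u^i\|_A^2$, I would use $\|e_u^i\|_A=\alpha\sup_{v_h}(e_p^i,\ddiv v_h)/\|v_h\|_A$ together with the weak inf-sup \eqref{ine:weak-inf-sup}, square via Young's inequality with parameter $2\theta>0$, and apply \eqref{ine:spec-equiv} to $h^2\|\nabla e_p^i\|^2$, producing
\[
\|e_u^i\|_A^2\ge\frac{\gamma L\eta^2}{\omega(1+2\theta)}\|e_p^i\|^2-\frac{\gamma L\epsilon^2C_2}{2\omega\theta}\|e_p^i\|_Z^2.
\]

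To close the estimate, I would drop the nonnegative residues $\tau\|e_p^i\|_{A_p}^2$, $\|e_u^{i-1}\|_A^2$, $(2/\beta)\|e_p^i\|^2$ and treat the last cross term $(\gamma-1)L\|e_p^i-e_p^{i-1}\|_Z^2$ case-wise: nonnegative and dropped when $\gamma\ge1$, otherwise bounded by $\|e_p^i-e_p^{i-1}\|_Z^2\le 2\|e_p^i\|_Z^2+2\|e_p^{i-1}\|_Z^2$ when $\gamma\in(1/2,1)$.  The Young parameter one here is not arbitrary but is \emph{forced} by the requirement that, after dividing by $\gamma L$, the right-hand coefficient of $\|e_p^{i-1}\|_Z^2$ equal exactly $|1-\gamma|/\gamma$ so that $\Phi$ appears on both sides.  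In both regimes, $(1+\rho)\Phi(e_p^i)\le\Phi(e_p^{i-1})$ reduces to the unified scalar inequality
\[
\frac{2(\gamma-|1-\gamma|)}{\gamma}-\frac{\epsilon^2C_2}{2\omega\theta}-\frac{\eta^2|1-\gamma|}{\omega\gamma(1+2\theta)}\ge0,
\]
which, after clearing denominators by $2\omega\gamma\theta(1+2\theta)$ and rescaling by $1/(4\omega\gamma)$, is precisely $q_2(\theta)\ge0$.  Since $\gamma>1/2$ makes the leading coefficient of $q_2$ positive and $q_2(0)=-\epsilon^2C_2/(4\omega)<0$, $q_2$ has a unique positive root $\theta^*$; setting $\theta=\theta^*$ yields the desired contraction, and the lower bound $\theta^*\ge\epsilon^2C_2\gamma/[4\omega(\gamma-|1-\gamma|)]$ follows from evaluating $q_2$ at this candidate value and checking that $q_2\le0$ there.

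The main obstacle is the simultaneous treatment of the two regimes $\gamma\ge1$ and $\gamma<1$: two Young parameters are in play—the free $2\theta$ in the inf-sup step, optimized via the root of $q_2$, and the parameter constrained to equal one in the $Z$-cross term by the structure of $\Phi$—and the exact collapse of the resulting coefficient condition into a single quadratic $q_2(\theta)\ge0$ in both regimes is the most delicate bookkeeping of the argument.
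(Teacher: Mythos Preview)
Your proposal is correct and follows essentially the same route as the paper's proof: derive the error identity, polarize the cross terms, absorb $\|e_u^i-e_u^{i-1}\|_A^2$ via the mechanics error equation and $\omega\ge1$, invoke the weak inf-sup with a free Young parameter $\theta$ to turn $\|e_u^i\|_A^2$ into $\|e_p^i\|^2$ minus a $Z$-correction, and then balance the $Z$-coefficients against the $L^2$-coefficients so that the combined functional $\Phi$ appears on both sides, which reduces to $q_2(\theta)\ge0$. The only cosmetic difference is that you polarize the $Z$-cross term $(\gamma-1)L(e_p^{i-1},e_p^i)_Z$ and then split into the cases $\gamma\ge1$ and $\gamma<1$, whereas the paper applies Young's inequality with the absolute value $|1-\gamma|$ directly; a short computation shows both routes produce the identical coefficients $(2\gamma-|1-\gamma|)L$ on the left and $|1-\gamma|L$ on the right, so there is no substantive divergence.

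One small expository point: the cancellation of the $p_h^{n-1}$ and $\bm u_h^{n-1}$ terms in the error equation has nothing to do with the choice of initial guess---it happens simply because the same previous-time data appears in both the iteration \eqref{it_pressure2} and the fixed-point form of \eqref{stab2}, so subtraction removes it. Your resulting error equation is correct regardless, but the reference to the initialization is a red herring.
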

\begin{proof}
By taking the differences of corresponding equations \eqref{stab1}-\eqref{stab2} and equations \eqref{it_pressure2}-\eqref{it_displacement2}, testing with $\bm{v}_h = e_{\bm{u}}^{i-1} \in V_h$
and $q_h = e_p^i \in Q_h$, and adding all together, we obtain
\begin{equation}
\label{first_equality_3}
a( e_{\bm{u}}^i,  e_{\bm{u}}^{i-1}) + \frac{1}{\beta} \| e_p^i \|^2 + \tau \|e_p^i\|_{A_p}^2 + \gamma L(e_p^i,e_p^i)_0 +(1 - \gamma) L (e_p^{i-1},e_p^i)_0  - L(e_p^{i-1},e_p^i) = 0.
\end{equation}
Using $M_l-M=Z$, 
\begin{align}
\label{first_inequality_31}
a( e_{\bm{u}}^i,  e_{\bm{u}}^{i-1}) + \frac{1}{\beta} \| e_p^i \|^2 + \tau \|e_p^i\|_{A_p}^2 +&\\\nonumber
+ \gamma L(e_p^i-e_p^{i-1},e_p^i) &+ \gamma L(e_p^i,e_p^i)_Z  +(1 - \gamma) L (e_p^{i-1},e_p^i)_Z   = 0.
\end{align}
Using a polarization identity and Young's inequality, we have
\begin{align}
&\quad \frac{1}{2} \| e_{\bm{u}}^2 \|_A^2 + \frac{1}{2} \| e_{\bm{u}}^{i-1} \|_A^2 + \frac{1}{\beta} \| e_p^i \|^2+ \tau \| e_{p}^i \|^2_{A_p} + \frac{\gamma L}{2} \| e_p^i \|^2 + \frac{\gamma L}{2} \| e_p^i - e_p^{i-1} \|^2  + \nonumber  \\
& + \left( \gamma - \frac{|1-\gamma|}{2}  \right) L \| e_p^i \|_Z^2\leq \frac{\gamma L}{2} \|e_p^{i-1} \|^2 + \frac{1}{2} \| e_{\bm{u}}^i - e_{\bm{u}}^{i-1} \|_A^2 + \frac{|1-\gamma|}{2} L \| e_p^{i-1} \|_Z^2. \label{ine:before-sub}
\end{align}
Taking the difference between equations \eqref{it_displacement2}  and \eqref{stab1} evaluated at iteration $i$ and $i-1$, tested with $\bm{v}_h = e_{\bm{u}}^i - e_{\bm{u}}^{i-1}$ yields
\begin{align*}
	\| e_{\bm{u}}^i  - e_{\bm{u}}^{i-1} \|_A^2 & = \alpha (e_p^i -e_p^{i-1}, \nabla \cdot ( e_{\bm{u}}^i  - e_{\bm{u}}^{i-1})) \leq \alpha \| e_p^i -e_p^{i-1} \| \| \nabla \cdot ( e_{\bm{u}}^i  - e_{\bm{u}}^{i-1})\|  \\
	& \leq \frac{\alpha}{\sqrt{\lambda + 2\mu/d}} \| e_p^i -e_p^{i-1} \| \| e_{\bm{u}}^i  - e_{\bm{u}}^{i-1} \|_A, 
\end{align*}
where we use the Cauchy-Schwarz inequality and the fact that $a(\bm{u}, \bm{u}) \geq (\lambda + \frac{2\mu}{d}) \| \nabla \cdot \bm{u} \|^2$.  Thus, 
\begin{equation} \label{ine:mech_ineq3}
	\| e_{\bm{u}}^i  - e_{\bm{u}}^{i-1} \|_A \leq \frac{\alpha}{\sqrt{\lambda + 2\mu/d}} \| e_p^i -e_p^{i-1} \|.
\end{equation}
From~\eqref{ine:weak-inf-sup}, we have, for any given $e_p^i$, there exists $\bm{w}_h \in V_h$, such that
\begin{equation*}
	(e_p^i, \ddiv \bm{w}_h) \geq \left( \eta \frac{1}{\sqrt{\lambda + 2\mu/d}} \| e_p^i \| - \epsilon \frac{1}{\sqrt{\lambda + 2\mu/d}} h \| \nabla e_p^i \|  \right) \| \bm{w}_h \|_A, \quad \| \bm{w}_h \|_A = \| e_p^i \|.
\end{equation*}
Taking the difference between equations \eqref{it_displacement2}  and \eqref{stab1} and testing with $\bm{v}_h = \bm{w}_h$,
\begin{align*}
	& \quad \alpha \left( \eta \frac{1}{\sqrt{\lambda + 2\mu/d}} \| e_p^i \| - \epsilon \frac{1}{\sqrt{\lambda + 2\mu/d}} h \| \nabla e_p^i \|  \right) \| \bm{w}_h \|_A \leq \alpha (e_p^i,  \ddiv \bm{w}_h) \\
	&= a(e_{\bm{u}}^i, \bm{w}_h) \leq \| e_{\bm{u}}^i \|_A \| \bm{w}_h \|_A,
\end{align*}
which implies 
\begin{equation*}
		\| e_{\bm{u}}^i \|_A \geq \eta \frac{\alpha}{\sqrt{\lambda + 2\mu/d}} \| e_p^i \| - \epsilon \frac{\alpha}{\sqrt{\lambda + 2\mu/d}} h \| \nabla e_p^i \|.
	\end{equation*}
Using Young's inequality with constant $\theta > 0$, we have
\begin{equation} \label{ine:low-bound_eu_A_norm}
		\| e_{\bm{u}}^i \|^2_A \geq \frac{\eta^2}{1+2\theta} \frac{\alpha^2}{\lambda + 2\mu/d} \| e_p^i \|^2 - \frac{ \epsilon^2}{2\theta} \frac{\alpha^2}{\lambda + 2\mu/d} h^2 \| \nabla e_p^i \|^2.
\end{equation}
Substituting~\eqref{ine:mech_ineq3} and~\eqref{ine:low-bound_eu_A_norm} back into \eqref{ine:before-sub}, dropping the $\frac{1}{2} \| e_{\bm{u}}^{i-1} \|_A^2$, $\frac{1}{\beta} \| e_p^i \|^2$, and $\tau \| e_p^i \|^2_{A_p} $ terms, we arrive at
\begin{align*}
& \quad  \left( \frac{1}{2} \frac{\eta^2}{1 + 2 \theta} \frac{\alpha^2}{\lambda + 2\mu/d} + \frac{\gamma L}{2} \right) \| e_p^i \|^2 + \frac{\gamma L}{2} \| e_p^i - e_p^{i-1} \|^2 + \left( \gamma - \frac{|1-\gamma|}{2 }  \right) L \| e_p^i \|_Z^2 \\
& \leq \frac{\gamma L}{2} \| e_p^{i-1} \|^2 + \frac{1}{2} \frac{\alpha^2 }{\lambda + 2\mu/d} \|e_p^i -e_p^{i-1} \|^2 + \frac{|1-\gamma|}{2} L \| e_p^{i-1} \|_Z^2 + \frac{\epsilon^2 C_2}{4 \theta} \frac{\alpha^2}{\lambda + 2\mu/d} \| e_p^i \|_Z^2.
\end{align*}
If $\gamma L = \omega  \frac{\alpha^2}{\lambda + 2\mu/d} \geq \frac{\alpha^2}{\lambda + 2\mu/d}$, i.e., $\omega \geq 1$, we have
\begin{align*}
& \quad \left( \frac{1}{2} \frac{\eta^2}{1 + 2 \theta} \frac{\gamma}{\omega}  + \frac{\gamma }{2} \right) \| e_p^i \|^2 +\left( (1 - \frac{\epsilon^2 C_2}{4 \omega \theta}) \gamma - \frac{|1-\gamma|}{2 }  \right)  \| e_p^i \|_Z^2 \\
& \leq \frac{\gamma }{2} \| e_p^{i-1} \|^2 + \frac{|1-\gamma|}{2} \| e_p^{i-1} \|_Z^2.
\end{align*}
To ensure convergence, we require that,
\begin{equation*}
(1 - \frac{\epsilon^2 C_2}{4 \omega \theta}) \gamma - \frac{|1-\gamma|}{2}   \geq \frac{|1-\gamma|}{2} \Longrightarrow \theta \geq \frac{\epsilon^2 C_2 \gamma}{4\omega (\gamma - |1-\gamma|)}.
\end{equation*}
Next, we try to find $\theta = \theta^* \geq \frac{\epsilon^2 C_2 \gamma}{4 \omega (\gamma-|1-\gamma|)} $ such that
\begin{equation*}
	\frac{(1 - \frac{\epsilon^2 C_2}{4 \omega \theta}) \gamma - \frac{|1-\gamma|}{2 } }{ \frac{1}{2} \frac{\eta^2}{1 + 2 \theta} \frac{\gamma}{\omega}  + \frac{\gamma }{2}} = \frac{|1-\gamma|}{\gamma}.
\end{equation*}
Direct calculations show that $\theta^* \geq \frac{\epsilon^2 C_2 \gamma}{4 \omega (\gamma -|1-\gamma|)}$ should be the positive root of the  quadratic equation~\eqref{def:q2}.  Since $\frac{2(\gamma - |1-\gamma|)}{\gamma} > 0$ for $\gamma \in (\frac{1}{2}, 2]$, the existence of $\theta^*$ is verified by the positiveness of the discriminant,  and the fact that $q_2(\frac{\epsilon^2 C_2 \gamma}{4 \omega (\gamma-|1-\gamma|)}) \leq 0$.  Therefore, we have
\begin{equation*}
 \left( \frac{\eta^2 \gamma}{2 \omega (1 + 2 \theta^*) } + \frac{\gamma }{2} \right) \left( \| e_p^i \|^2  + \frac{|1-\gamma|}{\gamma} \| e_p^i \|_Z^2 \right)  \leq  \frac{\gamma }{2} \left(  \| e_p^i \|^2 + \frac{|1-\gamma|}{\gamma}  \| e_p^{i-1} \|_Z^2  \right),
\end{equation*}
which completes the proof. 
\end{proof}

Finally, we give a theorem which determines how to choose parameter $\gamma$ so that the proposed iterative coupling method is optimal, that is, it converges in only two iterations. 

\begin{theorem}\label{thm:optimal}
Given the system of equations ${\cal A} x = b$, where matrix A has the form
$\mathcal{A} = 
\begin{pmatrix}
	A & G \\
	D & C
\end{pmatrix}$,
the iterative method  based on the following splitting of $\mathcal{A}$,
\begin{equation}\label{splitting}
\begin{pmatrix}
A& G\\
D &   C
\end{pmatrix}\; = 
\begin{pmatrix}
A& G\\
0 &  S_p
\end{pmatrix}\;
	-\begin{pmatrix}
		0& 0\\
		-D & S_p-C
	\end{pmatrix},
\end{equation}
where $S_p$ is the Schur complement $S_p = C - DA^{-1}G$, converges in two iterations.
\end{theorem}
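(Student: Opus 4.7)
The plan is to work directly with the error equation for the splitting $\mathcal{A} = \mathcal{M} - \mathcal{N}$, where
\[
\mathcal{M} = \begin{pmatrix} A & G \\ 0 & S_p \end{pmatrix}, \qquad \mathcal{N} = \begin{pmatrix} 0 & 0 \\ -D & S_p - C \end{pmatrix},
\]
and show that the iteration matrix $T = \mathcal{M}^{-1}\mathcal{N}$ is nilpotent of index two, i.e., $T^2 = 0$. Equivalently, I would write the error $e^i = (e_u^i, e_p^i)^\top$ at iteration $i$ and show that $e^i = 0$ for all $i \geq 2$, regardless of the initial guess.

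First I would read off from $\mathcal{M} e^i = \mathcal{N} e^{i-1}$ the two block equations
\[
A e_u^i + G e_p^i = 0, \qquad S_p\, e_p^i = -D e_u^{i-1} + (S_p - C) e_p^{i-1}.
\]
The first equation already gives the crucial relation $e_u^i = -A^{-1} G e_p^i$ for every $i \geq 1$; this is simply saying that the mechanics block is solved exactly in Step~2 of the iteration. Shifting the index by one, we have $e_u^{i-1} = -A^{-1} G e_p^{i-1}$ for all $i \geq 2$, which we can substitute into the second block equation.

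The key algebraic step is then to substitute this into the pressure error equation and recognize the Schur complement: for $i \geq 2$,
\[
S_p\, e_p^i = D A^{-1} G\, e_p^{i-1} + (S_p - C) e_p^{i-1} = \bigl(S_p - (C - D A^{-1} G)\bigr) e_p^{i-1} = 0,
\]
since $S_p = C - D A^{-1} G$ by definition. Multiplying by $S_p^{-1}$ yields $e_p^i = 0$ for $i \geq 2$, and then $e_u^i = -A^{-1} G e_p^i = 0$ as well. Therefore $e^2 = 0$ for any initial error $e^0$, and the method converges in (at most) two iterations.

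There is no real obstacle here beyond the bookkeeping: the argument is a one-line cancellation once one notices that Step~2 enforces $A e_u^i + G e_p^i = 0$ exactly, so that by the second iteration the reduced equation for $e_p^i$ is precisely the Schur-complement equation $S_p e_p^i = S_p e_p^{i-1} - S_p e_p^{i-1} = 0$. The only hidden assumption to flag is the invertibility of $A$ and $S_p$, which follows from the coercivity of $a(\cdot,\cdot)$ and the positive-definiteness of $C$ guaranteed by the stabilization in our application to $\mathcal{A}_{\text{stab}}$.
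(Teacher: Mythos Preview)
Your argument is correct: you show directly from the block error equations that after one iteration the displacement error satisfies $e_u^{i}=-A^{-1}Ge_p^{i}$, and substituting this at the next step makes the right-hand side of the pressure error equation collapse to zero by the very definition of $S_p$. This is exactly the nilpotency $T^2=0$ of the iteration matrix, and your flagging of the invertibility of $A$ and $S_p$ is appropriate.

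The paper reaches the same conclusion by a slightly more structural route: it writes the iteration matrix as $\mathcal{S}=\mathcal{I}-\mathcal{B}\mathcal{A}$ with $\mathcal{B}=\mathcal{M}^{-1}$, uses the similarity $\mathcal{S}^2=\mathcal{B}(\mathcal{I}-\mathcal{A}\mathcal{B})^2\mathcal{B}^{-1}$, and then observes from the block $\mathcal{L}\mathcal{U}$ factorization $\mathcal{A}=\mathcal{L}\mathcal{U}$ (with $\mathcal{U}=\mathcal{M}$) that $\mathcal{I}-\mathcal{A}\mathcal{B}=\mathcal{I}-\mathcal{L}$ is strictly block lower triangular, hence squares to zero. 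Your computation is the componentwise unpacking of that same fact; the paper's version has the advantage of making the role of the exact Schur complement transparent (it is precisely what makes $\mathcal{M}$ the $\mathcal{U}$ factor), while yours is more elementary and closer to how one would verify convergence in practice at the level of the two subproblems.
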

\begin{proof}
Denote by $\mathcal{S}$ the iteration matrix corresponding to splitting \eqref{splitting}, i.e., $\mathcal{S} = \mathcal{I} - \mathcal{B}\mathcal{A}$ where $\mathcal{B} = 
\begin{pmatrix}
	A & G \\
	0 & S_p
\end{pmatrix}^{-1}$.
Note that $$\mathcal{S}^2 = (\mathcal{I} - \mathcal{B}\mathcal{A})( \mathcal{I} - \mathcal{B}\mathcal{A}) = \mathcal{B} (\mathcal{I} - \mathcal{A}\mathcal{B}) (\mathcal{I} - \mathcal{A}\mathcal{B}) \mathcal{B}^{-1}.$$
Therefore, if $(\mathcal{I} - \mathcal{A}\mathcal{B})^2 = 0$, then $\mathcal{S}^2 = 0$. 
We now consider the block-$\mathcal{L} \mathcal{U}$ factorization of $\mathcal{A}$, i.e.,
\begin{equation*}
\mathcal{A} =
\begin{pmatrix}
	A & G \\
	D & C 
\end{pmatrix}
=
\begin{pmatrix}
I & 0 \\
D A^{-1} & I 
\end{pmatrix}
\begin{pmatrix}
	A & G \\
	0 & S_p 
\end{pmatrix}
=: \mathcal{L} \mathcal{U}.
\end{equation*}
Since  $\mathcal{B} = \mathcal{U}^{-1}$, then $\mathcal{I} - \mathcal{A}\mathcal{B} = \mathcal{I} - \mathcal{L}\mathcal{U} \mathcal{U}^{-1} = \mathcal{I} - \mathcal{L} = 
\begin{pmatrix}
	0 & 0 \\
	-D A^{-1} & 0
\end{pmatrix}$.
This implies that $(\mathcal{I} - \mathcal{A}\mathcal{B})^2 = 0$ and, consequently, $\mathcal{S}^2 = 0$. 
\end{proof}

In order to use Theorem~\ref{thm:optimal} to choose the optimal value of parameter $\gamma$ for the proposed iterative method, we need to be able to compute the Schur complement $S_p$ so that the splitting \eqref{splitting_gamma} of matrix  $\mathcal{A}_{stab}$ defining our method has the form given in~\eqref{splitting}. In order to do that, in the next section, we consider the one-dimensional Terzaghi's problem for poroelasticity, for which we can explicitly obtain $S_p$ for the two stabilized discretizations proposed. 

\section{One-dimensional problem and choice of parameter $\gamma$}\label{sec:1dchoice}
In this section, we determine an optimal value of the parameter $\gamma$ by studying a one-dimensional problem, namely the well-known Terzaghi's problem, where the Schur complement, $S_p = C-D A^{-1}G$, can be calculated explicitly.  
The Terzaghi problem models a column of a porous medium with height, $H$,
saturated by an incompressible fluid, bounded by impermeable and rigid
lateral walls and bottom, and supporting a load $\sigma_0$ on the top boundary
which is free to drain. This results in the following one-dimensional version of model \eqref{biot1}-\eqref{biot2}:
\begin{equation} \label{example-int}
\begin{array}{l}
-\displaystyle \frac{\partial}{\partial x} \left( (\lambda+2\mu)  \, \frac{\partial u}{\partial x}\right) + \displaystyle \frac{\partial p}{\partial x}=0, \\
\displaystyle \frac{\partial }{\partial t}\left (\displaystyle \frac{\partial u}{\partial
x}\right)-\displaystyle \frac{\partial}{\partial x} \left( K \, \frac{\partial p}{\partial x}\right) = 0,
\end{array}
(x,t)\in (0,H)\times(0,T_f],
\end{equation}
where for simplicity we have fixed $\alpha=1$ and $1/\beta=0$. This system is supplemented with the following boundary and initial conditions,
$$
\begin{array}{l}
\displaystyle (\lambda+2\mu)  \, \frac{\partial u}{\partial x}(0,t)=\sigma_0, \quad p(0,t)=0, \ t\in (0,T_f], \\
u(H,t)=0,  \ \displaystyle K \frac{\partial p}{\partial x}(H,t)=0, \ t\in (0,T_f], \\
\displaystyle \frac{\partial u}{\partial x}(x,0)=0, \ x\in [0,H].
\end{array}
$$
A uniform partition of spatial domain $\Omega=(0,H)$ with mesh size $h$ is considered, and 
the backward Euler method is chosen for discretization in time. Two spatial discretizations are considered, namely the stabilized P1-P1 and the stabilized MINI-element method.

\subsection{Discretization with linear finite elements (P1-P1)}
First, we discretize using linear finite elements for both displacement and pressure. Fluid pressure oscillations are observed even for this simple problem. In Figure~\ref{figuras_P1_stabilization}(a), we show the numerical solution obtained 
for the pressure field at the final time $T_f=0.1$, taking the hydraulic conductivity $K=10^{-6}$, $\lambda+2\mu =1$, Biot coefficient $\alpha=1$, and a mesh size $h=1/32$. Adding the proposed stabilization with parameter $\displaystyle L=\frac{3\alpha^2}{2(\lambda+2\mu)}$, however, eliminates the oscillations, as shown in Figure~\ref{figuras_P1_stabilization}(b). 
\begin{figure}[h!]
\begin{tabular}{cc}
\includegraphics[width = 0.45\textwidth]{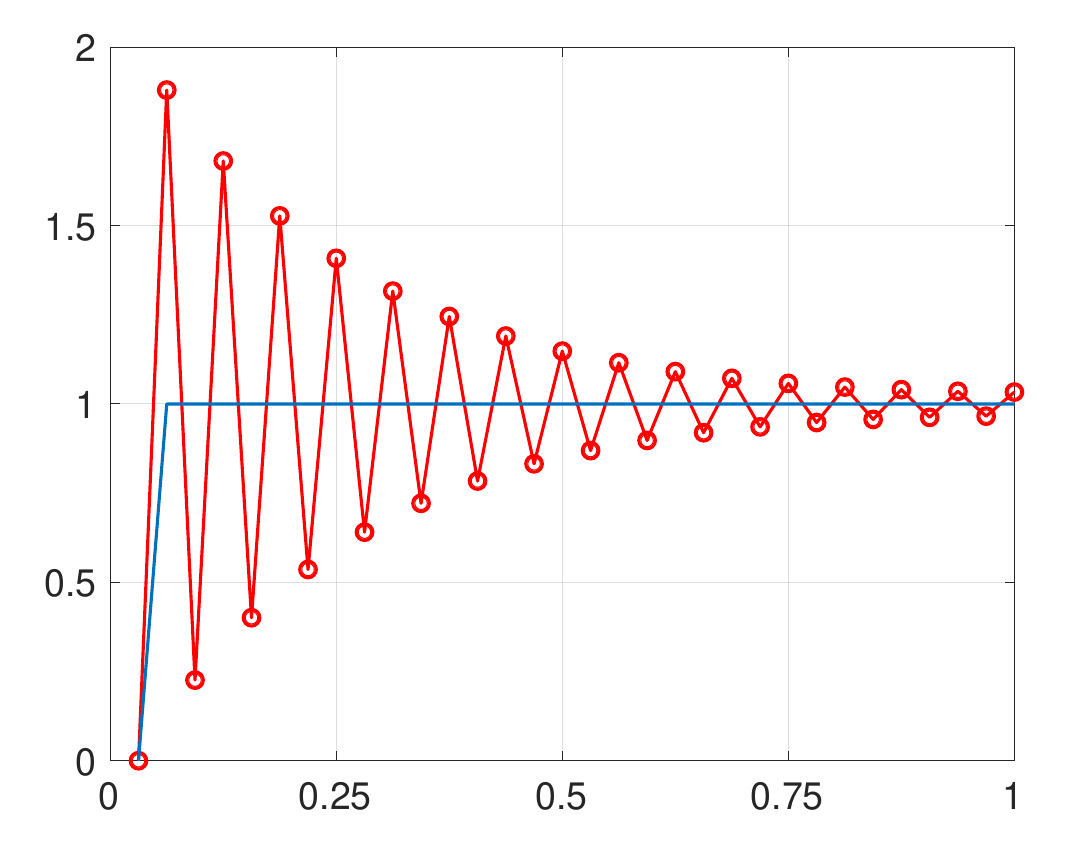}
&
\includegraphics[width = 0.45\textwidth]{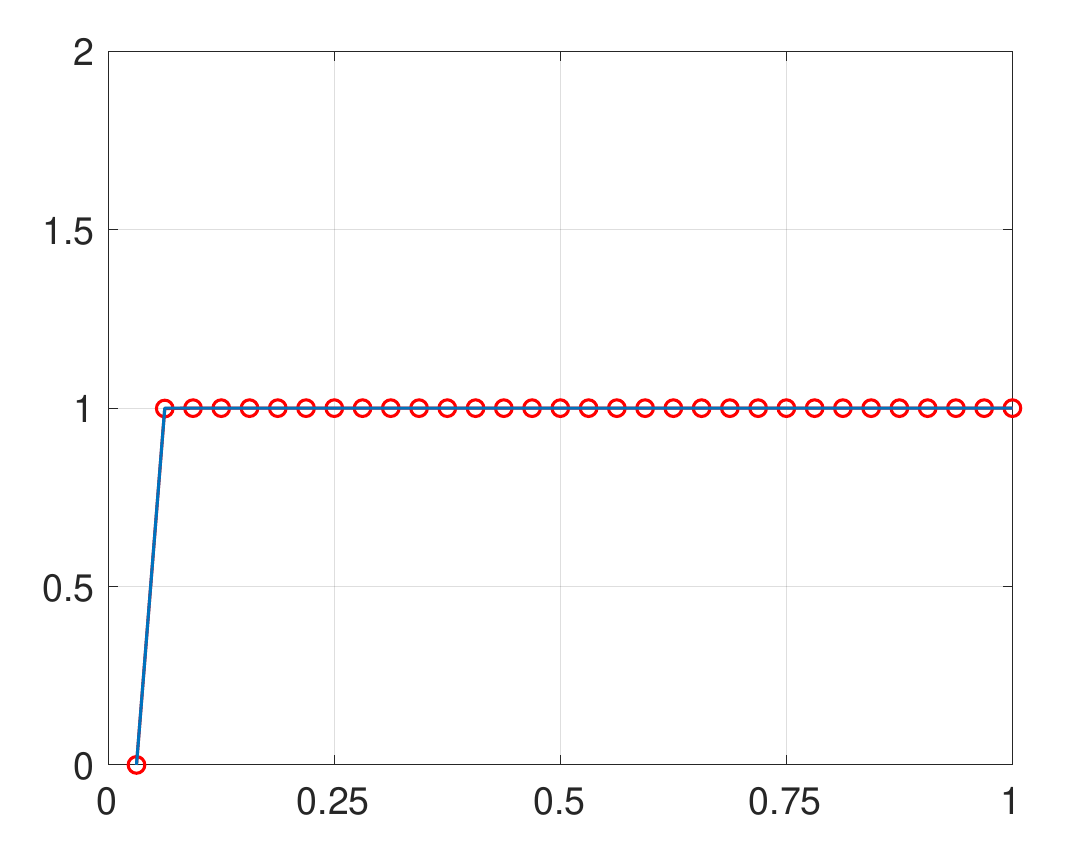}
\\
(a)
&
(b)
\end{tabular}
\caption{Comparison between the analytical solution (in blue) and the numerical solution (in red) for the pressure field obtained with (a) the P1-P1 finite-element method and (b) the stabilized P1-P1 finite-element method. The hydraulic conductivity is fixed at $K=10^{-6}$, and the grid size is $h=1/32$.}\label{figuras_P1_stabilization}
\end{figure}

To determine the appropriate value of $\gamma$ in order to obtain optimal convergence of the proposed iterative method, we use the following corollary of Theorem~\ref{thm:optimal}.
\begin{corollary}\label{cor_1}
Iterative method \eqref{it_pressure2}-\eqref{it_displacement2} with parameters  $\displaystyle L = \frac{3\alpha^2}{2(\lambda+2\mu)}$ and $\displaystyle\gamma= \frac{2}{3}$ converges in two iterations for the stabilized P1-P1 discretization of Terzaghi's problem.
\end{corollary}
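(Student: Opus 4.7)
The strategy is to invoke Theorem~\ref{thm:optimal}. The splitting~\eqref{splitting_gamma} of $\mathcal{A}_{stab}$ coincides with the block-LU splitting~\eqref{splitting} of Theorem~\ref{thm:optimal} precisely when
\[
\tau A_p + \beta^{-1} M + \gamma L M_l \;=\; S_p \;=\; C - D A^{-1} G,
\]
in which case convergence in two iterations is immediate. The plan is therefore to verify this identity for the 1D Terzaghi P1-P1 discretization with $\gamma = 2/3$ and $L = 3\alpha^2/(2(\lambda+2\mu))$. Under the Terzaghi assumptions $\alpha = 1$ and $1/\beta = 0$, $C$ reduces to $\tau A_p + L(M_l-M)$, so after cancelling $\tau A_p$ the identity becomes the purely algebraic, $\tau$- and $K$-independent relation
\[
L\bigl((1-\gamma)\,M_l - M\bigr) \;=\; D A^{-1} G.
\]

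The main work is computing $D A^{-1} G$ in closed form for the 1D P1-P1 discretization. I would first write down the three $n \times n$ matrices explicitly: $A$ equals $\frac{\lambda+2\mu}{h}$ times a tridiagonal with interior stencil $(-1,2,-1)$ and Neumann first row $(1,-1,0,\ldots)$ from the load condition at $x_0$; $G$ (and $D = -G^T$) has entries in $\{0,\pm 1/2\}$ coming from $-\int \psi_j\,\phi_i'\,dx$, with appropriate boundary adjustments. The inverse $A^{-1}$ admits a Green's-function closed form, and substituting it into $D A^{-1} G$ makes the dense contributions from $A^{-1}$ telescope through the divergence/gradient sandwich; the product collapses to the tridiagonal matrix
\[
D A^{-1} G \;=\; -\frac{h}{4(\lambda+2\mu)}\,T,
\]
where $T$ has the interior pattern $(1,2,1)$ and a corner entry $T_{n,n}=1$ reflecting the Neumann pressure boundary at $x_n$.

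With this in hand, the verification is elementary. For $\gamma=2/3$ and $L = 3/(2(\lambda+2\mu))$, a direct entry-by-entry check shows $\tfrac13 M_l - M = -\tfrac{h}{6}\,T$ (interior diagonal $\tfrac{h}{3}-\tfrac{2h}{3}=-\tfrac{h}{3}$, off-diagonal $-\tfrac{h}{6}$, and corner $\tfrac13\!\cdot\!\tfrac{h}{2} - \tfrac{h}{3} = -\tfrac{h}{6}$), and consequently
\[
L\bigl(\tfrac13 M_l - M\bigr) \;=\; \frac{3}{2(\lambda+2\mu)}\cdot\Bigl(-\tfrac{h}{6}\Bigr)\,T \;=\; -\frac{h}{4(\lambda+2\mu)}\,T,
\]
which matches $D A^{-1} G$ exactly. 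Theorem~\ref{thm:optimal} then yields convergence in two iterations.

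The main obstacle I anticipate is the boundary-condition bookkeeping: the Neumann/Dirichlet roles are opposite on the two endpoints for pressure and displacement, which produces asymmetric corner corrections in $A$, $G$, $M$, and $M_l$. Ensuring the identity holds not only in the interior band but also at both corners (including the $T_{n,n}=1$ entry) is where sign or index errors can creep in; sanity-checking on a small example such as $n=3$ is an efficient way to lock down the general formula before writing it out for arbitrary $n$.
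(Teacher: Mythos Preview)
Your proposal is correct and follows essentially the same approach as the paper: compute $-DA^{-1}G$ explicitly for the 1D P1-P1 Terzaghi discretization and verify that the splitting~\eqref{splitting_gamma} with $L=\tfrac{3\alpha^2}{2(\lambda+2\mu)}$, $\gamma=\tfrac{2}{3}$ matches the form~\eqref{splitting} of Theorem~\ref{thm:optimal}. The only cosmetic difference is that the paper records the result directly as $-DA^{-1}G=\tfrac{\alpha^2}{\lambda+2\mu}\bigl(\tfrac{3}{2}M-\tfrac{1}{2}M_l\bigr)$, which makes the matching with $LM+(\gamma-1)LM_l$ a one-line check, whereas you pass through the explicit $(1,2,1)$ tridiagonal $T$ and then separately identify $\tfrac{1}{3}M_l-M=-\tfrac{h}{6}T$; the two computations are equivalent.
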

\begin{proof}
The proof is based on explicitly calculating the Schur complement, $S_p$, for the considered stabilized P1-P1 discretization of the Terzaghi problem, and then using Theorem~\ref{thm:optimal}. Specifically, it is easily shown that 
$$\displaystyle S_p -C = -D A^{-1}G = \frac{\alpha^2}{\lambda+2\mu} (\frac{3}{2} M - \frac{1}{2} M_l).$$ 
By using this expression, and choosing  $\displaystyle L = \frac{3\alpha^2}{2(\lambda+2\mu)}$ and $\displaystyle \gamma= \frac{2}{3}$, we observe that the splitting corresponding to the iterative method \eqref{it_pressure2}-\eqref{it_displacement2}  given in \eqref{splitting_gamma} matches with the decomposition \eqref{splitting}  in Theorem~\ref{thm:optimal}, and therefore such scheme  converges in only two iterations.
\end{proof}
The previous result provides the optimal value of $\gamma$ for the proposed iterative scheme. To illustrate this graphically, we choose, without loss of generality, the following values of the physical and discretization parameters: the hydraulic conductivity is  $K=10^{-10}$,  $\lambda + 2 \mu=1$, $\alpha=1$, and the mesh size is chosen as $h=1/32$. As the stopping criterion we use that the norm of the residual is less than $10^{-8}$. In Figure~\ref{terzaghilinear}, we show the number of iterations of the proposed scheme for different values of parameter $\gamma$. As expected from the previous theoretical result, only two iterations are needed when $\gamma=2/3$. Other values provide a higher number of iterations, and even exhibit divergence for values of $\gamma$ lower than $0.4$. 
\begin{figure}[!htb]
\begin{center}
\includegraphics*[width = 0.45\textwidth]{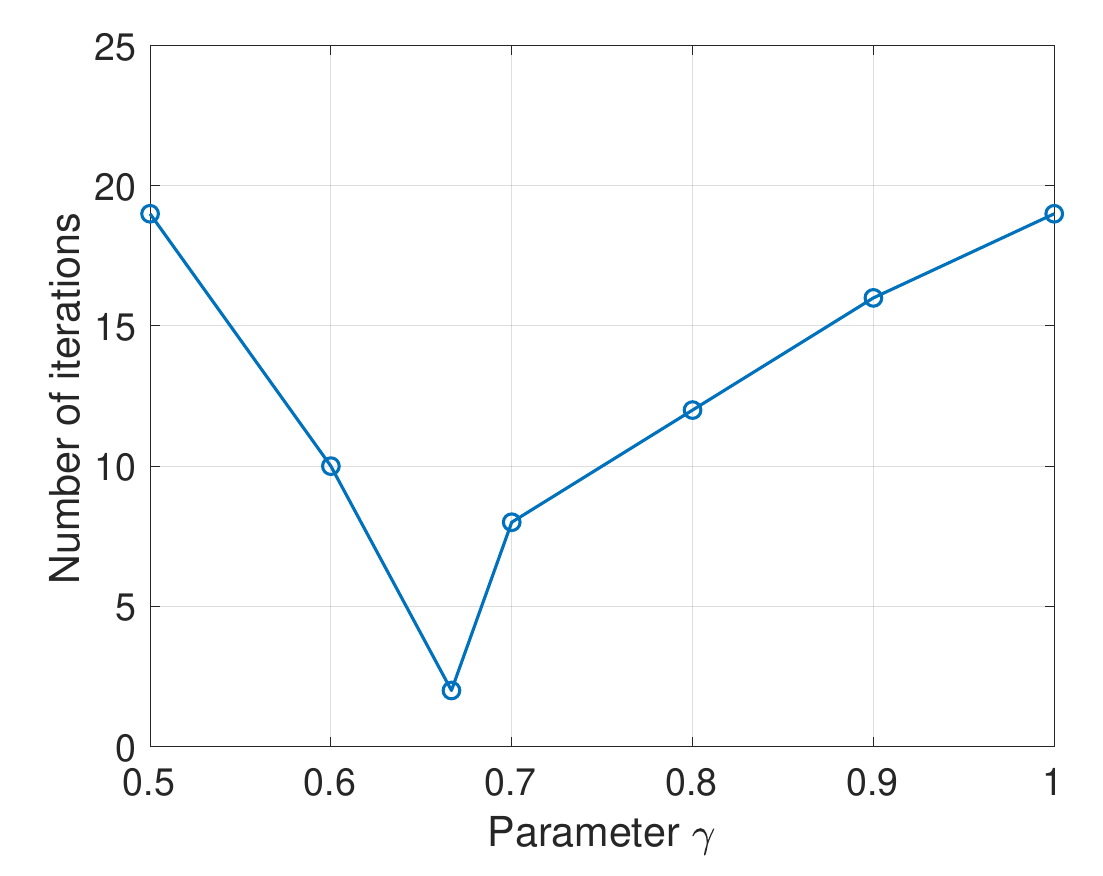}
\caption{Number of iterations of the iterative coupling method for different values of $\gamma$, when the stabilized P1-P1 discretization of Terzaghi's problem is considered. }
\label{terzaghilinear}
\end{center}
\end{figure}

\subsection{Discretization with the MINI-element for displacement}
If the\newline MINI-element is used as the spatial discretization for the displacement (and linears for the pressure), fluid pressure oscillations are once again observed in the numerical solution. Figure~\ref{figuras_mini_stabilization}(a) shows this oscillation at the final time $T_f=0.1$, taking the hydraulic conductivity $K=10^{-6}$, $\lambda + 2\mu=1$, Biot coefficient $\alpha = 1$, and a mesh size $h=1/32$. Adding the proposed stabilization with parameter $\displaystyle L=\frac{\alpha^2}{\lambda+2\mu}$, however, eliminates the oscillations as shown in Figure~\ref{figuras_mini_stabilization}(b). 
\begin{figure}[h!]
\begin{tabular}{cc}
\includegraphics[width = 0.45\textwidth]{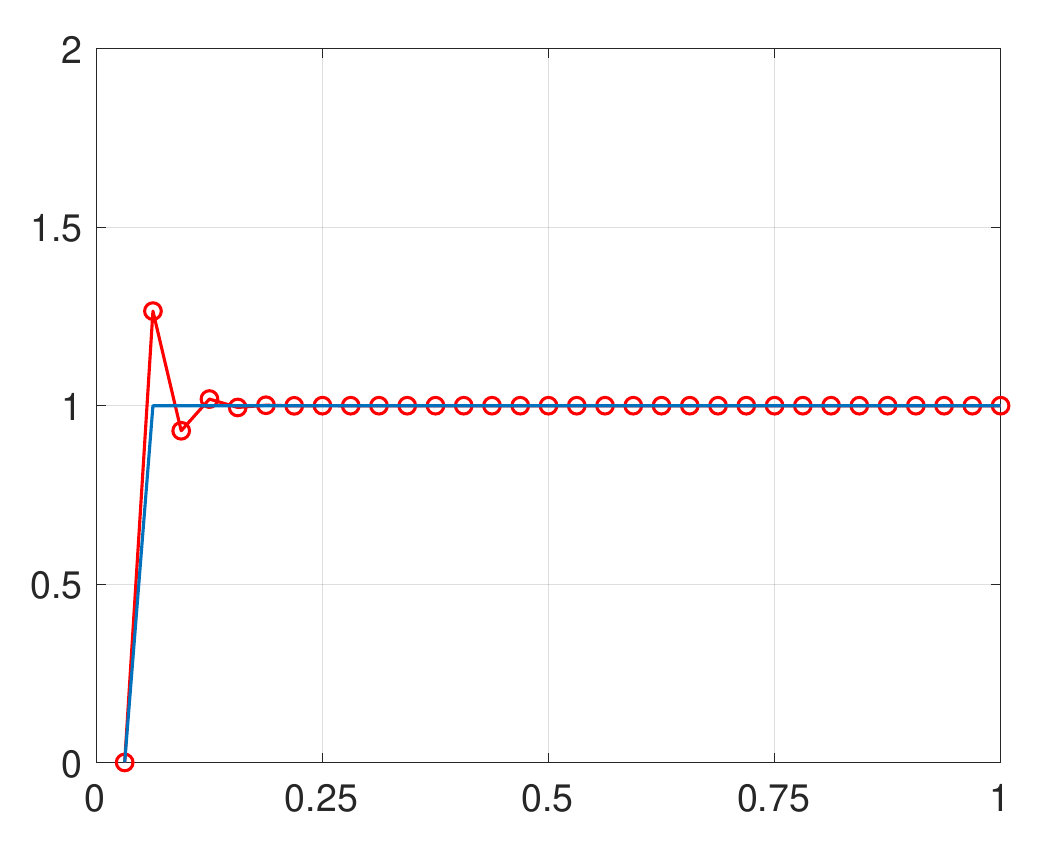}
&
\includegraphics[width = 0.45\textwidth]{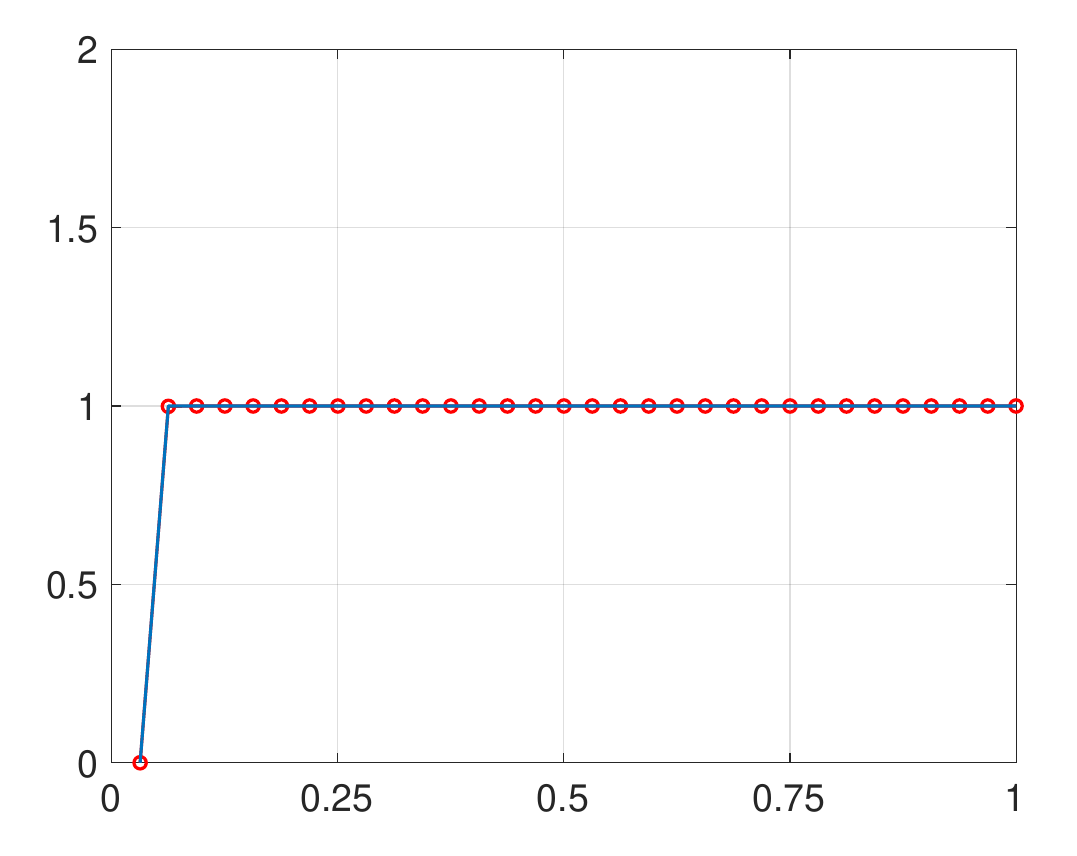}
\\
(a)
&
(b)
\end{tabular}
\caption{Comparison between the analytical solution (in blue) and the numerical solution (in red) for the pressure field obtained with (a) the MINI-element and (b) the stabilized MINI-element for displacement. The hydraulic conductivity is fixed as $K=10^{-6}$, and the grid size is $h=1/32$.}\label{figuras_mini_stabilization}
\end{figure}

Similarly to the linear case, we find the optimal value of $\gamma$ in terms of the convergence of the proposed iterative method, using another corollary of Theorem~\ref{thm:optimal}.
\begin{corollary}\label{cor_2}
Iterative method \eqref{it_pressure2}-\eqref{it_displacement2} with parameters  $\displaystyle L = \frac{\alpha^2}{\lambda+2\mu}$ and $\gamma= 1$ converges in two iterations for the stabilized MINI-element discretization of the Terzaghi's problem. 
\end{corollary}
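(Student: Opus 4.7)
The plan is to mirror the argument used for Corollary~\ref{cor_1}: compute the Schur complement $S_p = C - DA^{-1}G$ explicitly for the MINI-element discretization of the one-dimensional Terzaghi problem, and then verify that with the prescribed choices of $L$ and $\gamma$, the splitting \eqref{splitting_gamma} of $\mathcal{A}_{stab}$ reduces exactly to the optimal splitting \eqref{splitting} of Theorem~\ref{thm:optimal}. Once that identification is made, the two-step convergence follows immediately from Theorem~\ref{thm:optimal}.

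The heart of the argument is the computation of $-DA^{-1}G$. Because the MINI-element enriches ${\bf V}_l$ with element bubbles, the elasticity operator has the block form
\begin{equation*}
A \;=\; (\lambda+2\mu)\begin{pmatrix} A_{ll} & A_{lb} \\ A_{bl} & A_{bb} \end{pmatrix},
\end{equation*}
where the subscripts $l$ and $b$ refer to the linear and bubble degrees of freedom. The coupling operator inherits the same block decomposition, $G = -G_l - G_b$ (rows for $l$ and $b$, respectively). Performing static condensation of the bubble block yields the reduced elasticity operator on ${\bf V}_l$ plus a correction from the bubble, and accordingly
\begin{equation*}
-DA^{-1}G \;=\; \frac{1}{\lambda+2\mu}\left(G_l^{T} S_{ll}^{-1} G_l \;+\; \text{(bubble correction)}\right),
\end{equation*}
where $S_{ll}$ is the reduced Schur complement on ${\bf V}_l$. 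The goal is to show that, in one dimension with the Terzaghi boundary data, the bubble contribution exactly cancels the $-\tfrac{1}{2}M_l$ term that appeared in the P1-P1 case, leaving
\begin{equation*}
-DA^{-1}G \;=\; \frac{\alpha^2}{\lambda+2\mu}\,M.
\end{equation*}
This is a direct, element-by-element calculation using the standard quadratic bubble on each sub-interval; the one-dimensional geometry makes the arithmetic explicit and tractable.

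Given that identity, and recalling $C = \tau A_p + \beta^{-1}M + L(M_l - M)$, one obtains
\begin{equation*}
S_p \;=\; C \;+\; \frac{\alpha^2}{\lambda+2\mu}\,M \;=\; \tau A_p + \beta^{-1}M + L M_l
\end{equation*}
precisely when $L = \alpha^2/(\lambda+2\mu)$. With this $L$ and $\gamma = 1$, the splitting \eqref{splitting_gamma} becomes
\begin{equation*}
\mathcal{A}_{stab} \;=\; \begin{pmatrix} A & G \\ 0 & \tau A_p + \beta^{-1}M + L M_l \end{pmatrix} \;-\; \begin{pmatrix} 0 & 0 \\ -D & L M \end{pmatrix},
\end{equation*}
whose upper-triangular factor equals $\begin{pmatrix} A & G \\ 0 & S_p \end{pmatrix}$ and whose lower factor equals $\begin{pmatrix} 0 & 0 \\ -D & S_p - C \end{pmatrix}$, matching \eqref{splitting} verbatim. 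Theorem~\ref{thm:optimal} then yields convergence in two iterations.

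The main obstacle is the explicit evaluation of the bubble correction to $-DA^{-1}G$, that is, showing that the MINI enrichment turns the $\tfrac{3}{2}M - \tfrac{1}{2}M_l$ expression of the P1-P1 case into a plain $M$. This is routine but not automatic: one must treat the bubble static condensation carefully, in particular taking into account the interaction between $G_b$ (which involves $\int p_h \, \partial_x \phi_b$ with $\phi_b$ the quadratic bubble) and $A_{bb}^{-1}$. Everything else in the proof is bookkeeping to line up the two splittings.
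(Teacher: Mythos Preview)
Your proposal is correct and follows essentially the same approach as the paper: compute the Schur complement $-DA^{-1}G=\dfrac{\alpha^2}{\lambda+2\mu}\,M$ for the one-dimensional MINI discretization, then check that with $L=\alpha^2/(\lambda+2\mu)$ and $\gamma=1$ the splitting \eqref{splitting_gamma} coincides with \eqref{splitting}, so Theorem~\ref{thm:optimal} applies. The paper simply asserts the Schur-complement identity without detailing the bubble static condensation you sketch, so your outline is a more explicit version of the same argument.
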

\begin{proof}
In this case, we once again directly compute the Schur complement, 
$$\displaystyle S_p -C = -D A^{-1}G = \frac{\alpha^2}{\lambda+2\mu} M.$$ 
Choosing  $\displaystyle L = \frac{\alpha^2}{(\lambda+2\mu)}$ and $\gamma= 1$, we observe that the splitting corresponding to the iterative method \eqref{it_pressure2}-\eqref{it_displacement2}  given in \eqref{splitting_gamma} matches with the decomposition \eqref{splitting}  in Theorem~\ref{thm:optimal}, and, therefore, such scheme  converges in only two iterations.
\end{proof}
Note that for the stabilized MINI-element the optimal parameter for the iterative algorithm is $\gamma=1$ and, therefore, our approach reduces to
a simpler one.  Again, in order to illustrate this result graphically, we take the hydraulic conductivity as $K=10^{-10}$, $\lambda + 2 \mu=1$, $\alpha=1$, and the mesh size as $h=1/32$, and plot the number of iterations of the proposed scheme for different values of parameter $\gamma$ in Figure~\ref{terzaghimini}.  As the stopping criterion, we use that the norm of the residual is less than $10^{-8}$.  As expected from the previous theoretical result, only two iterations are needed when $\gamma=1$. Other values provide a higher number of iterations.

\begin{figure}[!htb]
\begin{center}
\includegraphics*[width = 0.45\textwidth]{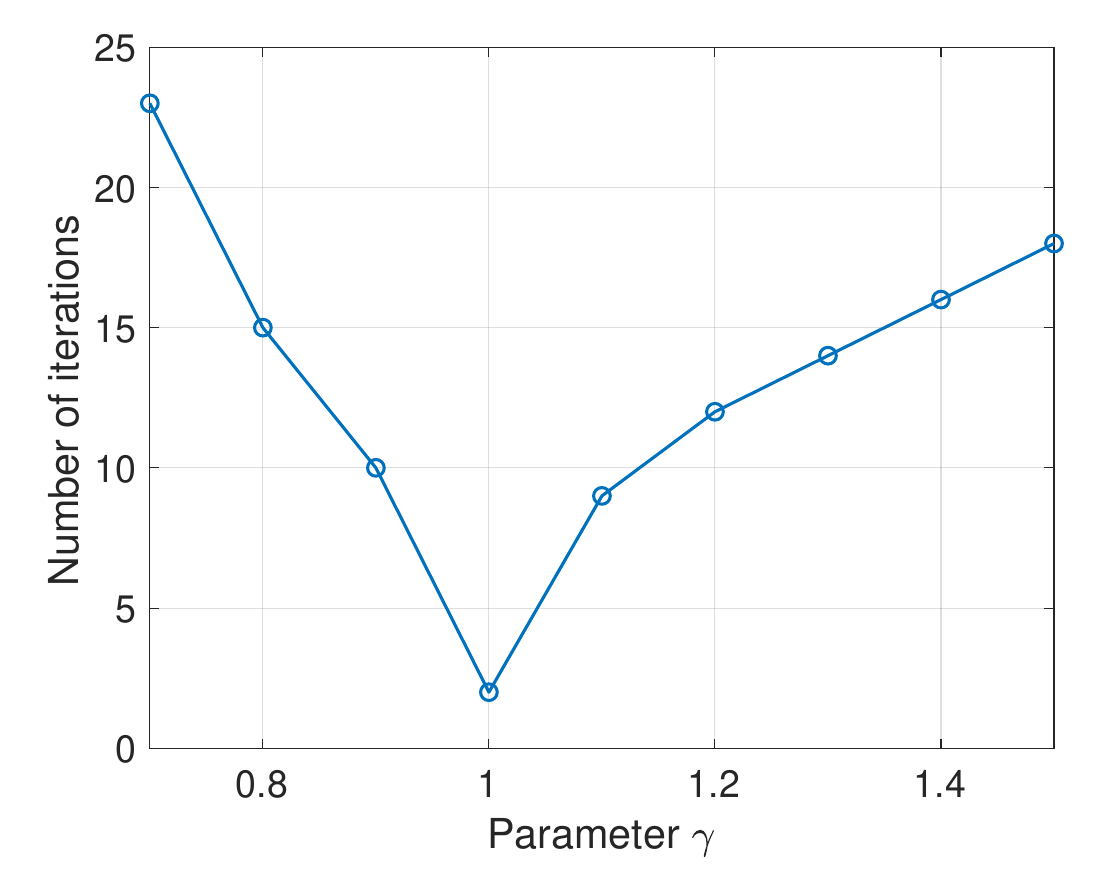}
\caption{Number of iterations of the iterative coupling method for different values of $\gamma$, when the stabilized MINI-element discretization of Terzaghi's problem is considered. }
\label{terzaghimini}
\end{center}
\end{figure}

\section{Numerical experiments}\label{sec:numerical}
\setlength{\belowcaptionskip}{5pt}
\setlength{\abovecaptionskip}{5pt}
In this section, we present numerical results confirming the robustness of our proposed algorithms with respect to physical and discretization parameters for both two- and three-dimensional problems. With this purpose, two well-known benchmark problems from the literature are considered, namely the two-dimensional Barry \& Mercer's problem~\cite{Barry} and a three-dimensional footing problem (see for example~\cite{NLA:NLA587}).
The footing test problems were implemented in the HAZmath library~\cite{hazmath}, which contains routines for finite elements, multilevel solvers, and graph
algorithms, and the numerical tests were performed on a workstation with an 8-core 3 GHz Intel Xeon “Sandy Bridge” CPU and 32 GB of RAM per core.

\subsection{Two-dimensional Barry \& Mercer problem}
A well-known poroelastic benchmark test on a finite two-dimensional domain is Barry \& Mercer's problem~\cite{Barry}. It models the behavior of a rectangular uniform porous material $[0,a]\times[0,b]$ with a pulsating point source, drained on all sides, and with zero tangential displacements assumed on the whole boundary. The point-source corresponds to a sine wave and is given by \(f(t) = 2\upsilon \,\delta_{(x_0,y_0)}\sin(\upsilon \,t)\), where $\upsilon = \displaystyle\frac{(\lambda + 2\mu)K}{a\,b}$ and $\delta_{(x_0,y_0)}$ is the Dirac delta at the point $(x_0,y_0)$. In Figure~\ref{BarryMercer}, the computational domain and the boundary conditions are depicted.
\begin{figure}[!htb]
\begin{center}
\includegraphics*[width = 0.45\textwidth]{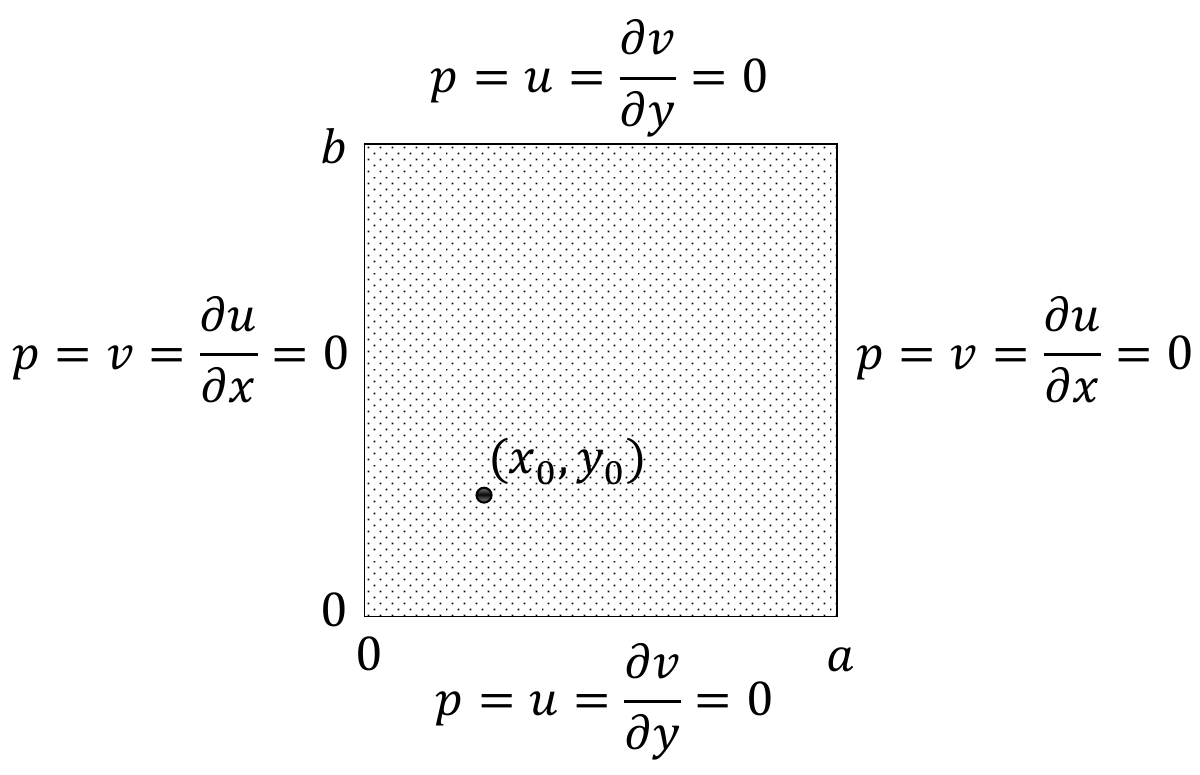}
\caption{Computational domain and boundary conditions for the Barry and Mercer's source problem.}
\label{BarryMercer}
\end{center}
\end{figure}

We consider the square domain $(0,1)\times(0,1)$, and the following values of the material parameters: $E = 10^5$, $\nu = 0.1$, $\alpha=1$, $\beta=10^{8}$ and $K = 10^{-6}$.  The source is positioned at the point $(1/4,1/4)$, and a right triangular grid of mesh size $h=2^{-6}$ is used for the simulations. Fluid pressure oscillations for the Barry and Mercer's problem can be observed by considering standard P1-P1 or MINI element discretizations. In Figure~\ref{BM_oscillations}, we show the numerical solutions obtained for the pressure field at a final time $T_f=10^{-4}$, with only one time step, by using P1-P1 (Fig.~\ref{fig:p1-p1-o}--\ref{fig:p1-p1-no-o}) and the MINI element (Fig.~\ref{fig:MINI-o}--\ref{fig:MINI-no-o}). We observe that without a stabilization term (Fig.~\ref{fig:p1-p1-o} and Fig.~\ref{fig:MINI-o}) non-physical oscillations appear near the source-point.  Adding the proposed stabilization, however, eliminates the oscillations (Fig.~\ref{fig:p1-p1-no-o} and Fig.~\ref{fig:MINI-no-o}).

\begin{figure}[!htb]
     \centering
     \begin{subfigure}[b]{0.3\textwidth}
         \centering
         \includegraphics[width=\textwidth]{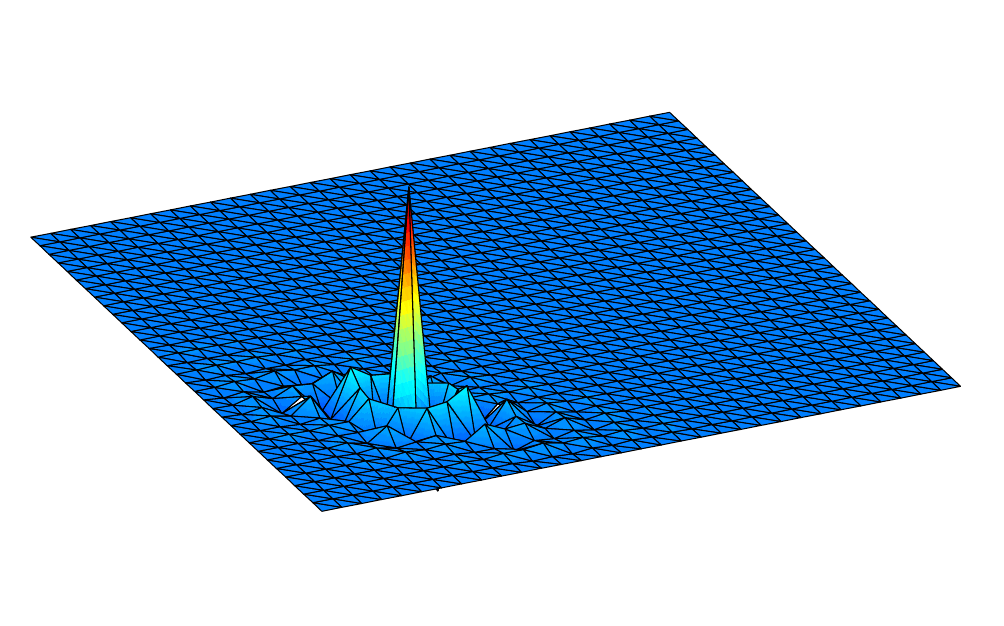}
         \caption{P1-P1 with no stabilization}
         \label{fig:p1-p1-o}
     \end{subfigure}
     \hspace*{1cm}
     \begin{subfigure}[b]{0.3\textwidth}
         \centering
         \includegraphics[width=\textwidth]{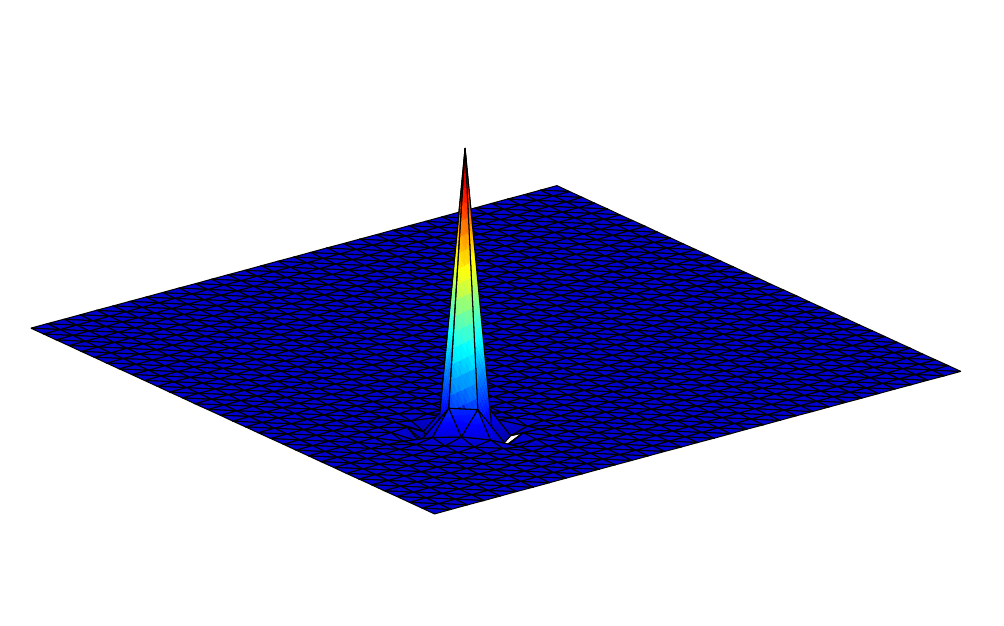}
         \caption{P1-P1 with stabilization}
         \label{fig:p1-p1-no-o}
     \end{subfigure}
          
     \begin{subfigure}[t]{0.3\textwidth}
         \centering
         \includegraphics[width=\textwidth]{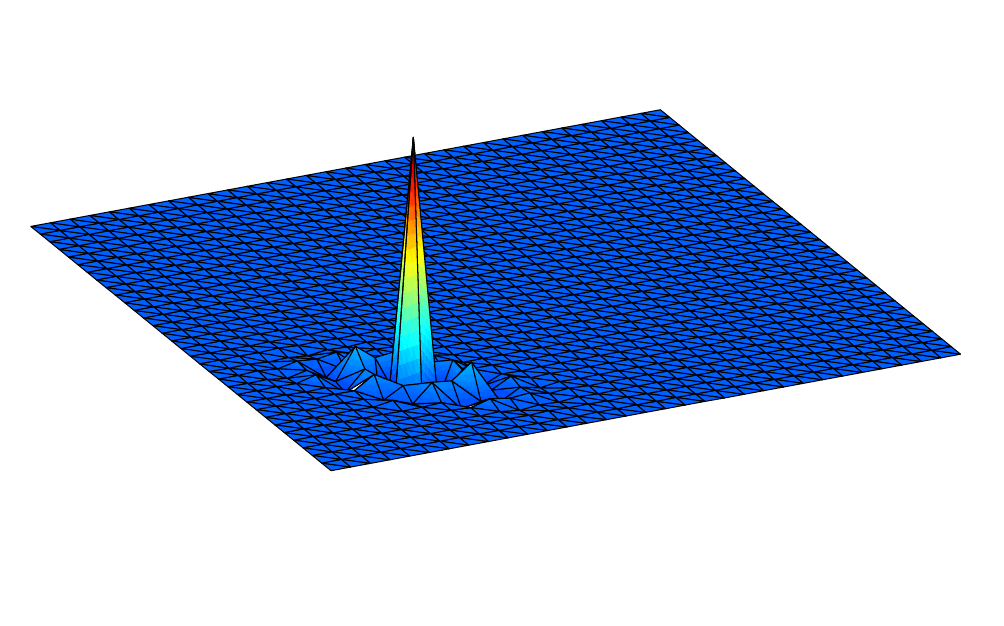}
         \caption{MINI with no stabilization}
         \label{fig:MINI-o}
     \end{subfigure}
          \hspace*{1cm}
     \begin{subfigure}[t]{0.3\textwidth}
         \centering
         \includegraphics[width=\textwidth]{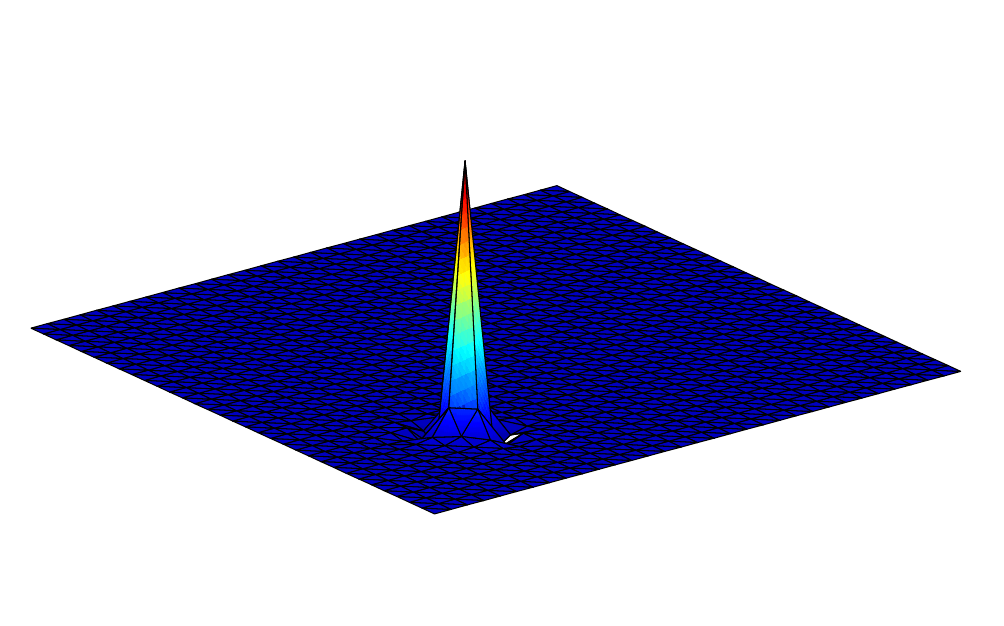}
         \caption{MINI with  stabilization}
         \label{fig:MINI-no-o}
     \end{subfigure}
\caption{Numerical solution (pressure field) at the final time $T_f=10^{-4}$ with hydraulic conductivity $K = 10^{-6}$, Poisson ratio $\nu = 0.1$, and mesh spacing $h=1/64$.}
\label{BM_oscillations}
\end{figure}

When applying the iterative coupling method proposed, we choose the values of parameters $L$ and $\gamma$ which were demonstrated to be optimal in the one-dimensional case. While this does not mean they are optimal for the two-dimensional case here, experimentally we see that they provide almost optimal results in all the tests carried out. 
We first consider the stabilized linear finite-element discretization. In order to illustrate the suitability of the choice of $L$ and $\gamma$, we consider two different test cases with different values of the Poisson ratio $\nu$ and hydraulic conductivity $K$ and different grid sizes. In Figure \ref{Parameters_P1}, we fix $h = 1/32$ and show the number of iterations of the proposed scheme for different values of parameter $\gamma$ and two values of hydraulic conductivity: $K= 10^{-10}$ and $K=10^{-2}$. Figure \ref{Parameters_P1}(a) shows results for $\nu=0.2$ and Figure \ref{Parameters_P1}(b) shows results for $\nu=0.4$.
From both figures, we observe that the choice of $\gamma$ is not important when $K$ is large, whereas for small values of $K$ it is crucial to choose a proper value of parameter $\gamma$. Also, in contrast to the one-dimensional case, here $\gamma = 2/3$ is not optimal for both values of $\nu$ (it is for $\nu=0.4$ but not for $\nu=0.2$). However, the choice of  $\gamma = 2/3$ provides very good, almost optimal, results as is seen in both pictures. 
\begin{figure}[h!]     
\begin{tabular}{cc}
\includegraphics[width = 0.45\textwidth]{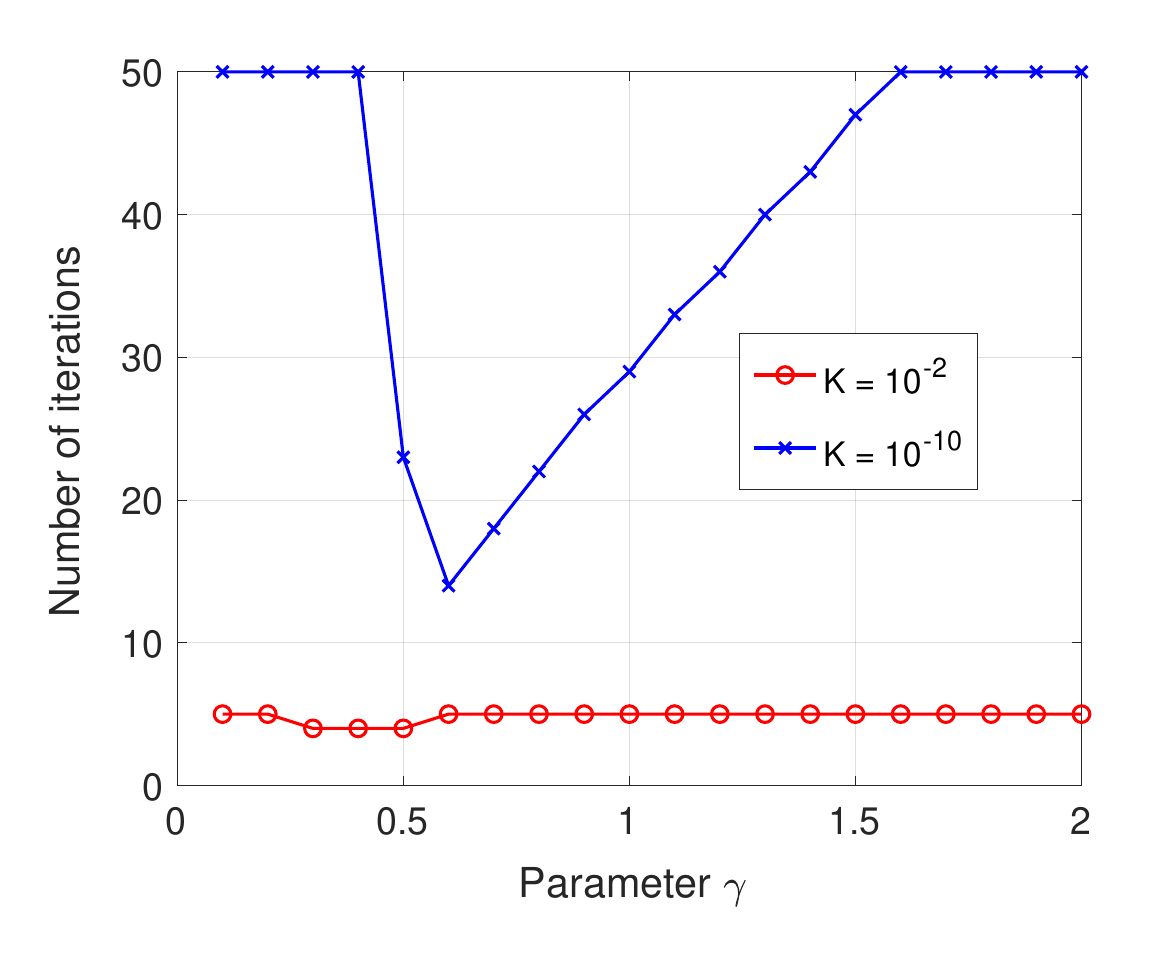}
&
\includegraphics[width = 0.45\textwidth]{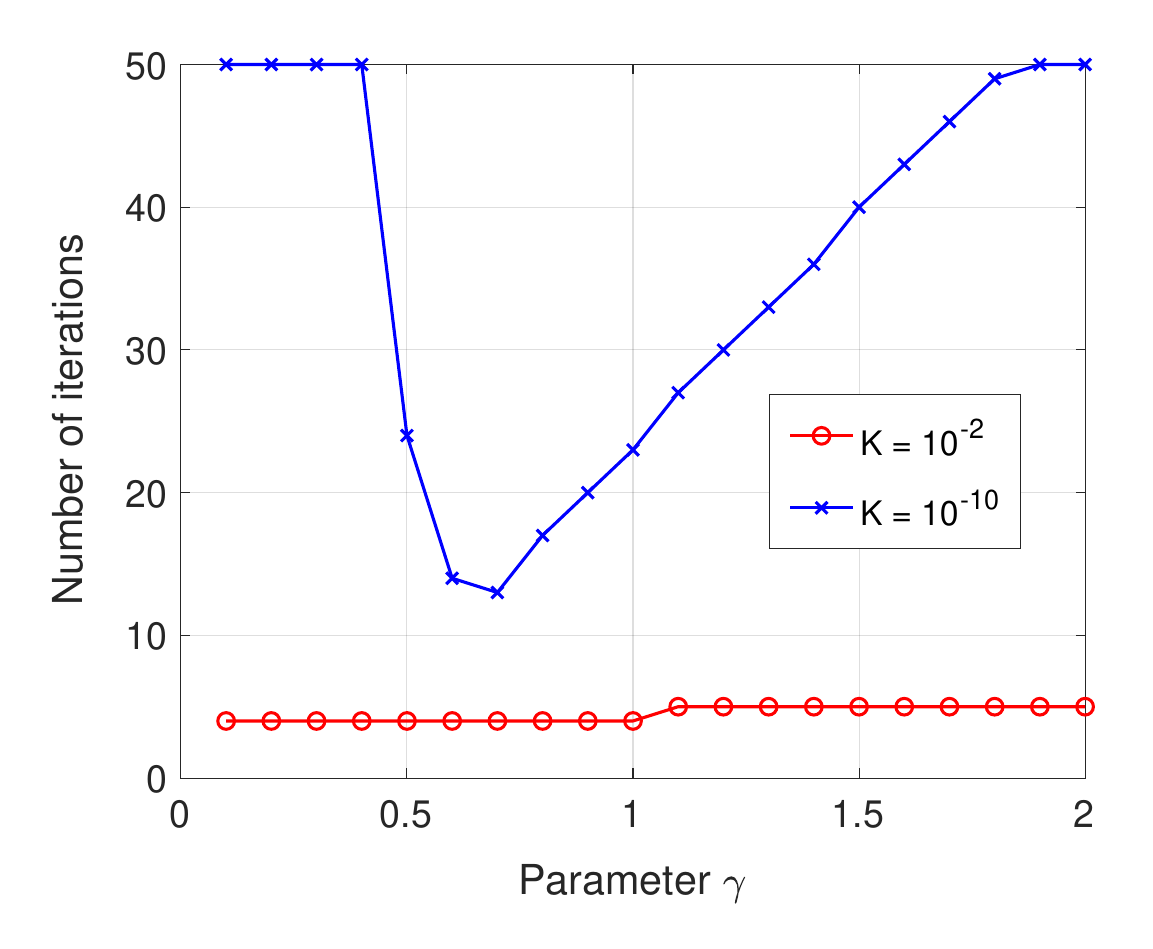}
\\
(a)
&
(b)
\end{tabular}
\caption{Number of iterations of the iterative coupling scheme for different values of $\gamma$, when the stabilized P1-P1 discretization is considered on the Barry \& Mercer's problem. Two different test cases are shown: (a) $\nu=0.2$ and (b) $\nu=0.4$.}\label{Parameters_P1}
\end{figure}

Thus with this choice of parameter $\gamma$, in Table \ref{Tabla_BM_P1}(a), we show the number of iterations of Algorithm \eqref{it_pressure2}-\eqref{it_displacement2} using the stabilized P1-P1 discretization for several values of the hydraulic conductivity $K$ and the mesh size $h$. We use $\| \delta p^{n,i} \| + \| \delta {\bf u}^{n,i} \| \leq 10^{-8}$ as the stopping criterion, where $\delta p_h^{n,i} = p_h^{n,i} - p_h^{n,i-1}$ and $\delta {\bf u}_h^{n,i} = {\bf u}_h^{n,i} - {\bf u}_h^{n,i-1}$ denote the difference between two successive approximations for displacements and for pressure, respectively.  The number of iterations stays relatively stable for all the different values of $K$ and $h$, demonstrating the robustness of the proposed iterative coupling method, i.e., Algorithm \eqref{it_pressure2}-\eqref{it_displacement2}.
In Table~\ref{Tabla_BM_P1}(b), we show the robustness of the proposed iterative solver with respect to the Poisson ratio. For different values of $\nu$ varying from $0.1$ to $0.49$ and different mesh sizes, we display the number of iterations of Algorithm \eqref{it_pressure2}-\eqref{it_displacement2} in order to fulfill the same stopping criterion as above. Again, the observed results are robust with respect to the considered parameters. 


\begin{table}[!htbp]
\begin{tabular}{cc}
\begin{minipage}{0.46\textwidth}
\begin{center}
		\begin{tabular}{|c|c|c|c|c|}  
			\cline{1-5}
		         $K$/$h$ & $1/16$ & $1/32$ & $1/64$ & $1/128$\\
			\cline{1-5}
			\hline
			\multicolumn{1}{|c|}{$10^{-2}$}   & 4 & 4 & 4 & 4 \\ 		
			\multicolumn{1}{|c|}{$10^{-4}$}   & 6 &  6 & 6  &  6  \\ 
			\multicolumn{1}{|c|}{$10^{-6}$}   & 11 & 11  & 11 & 11 \\ 
			\multicolumn{1}{|c|}{$10^{-8}$}  & 15 & 15   &  15 & 15   \\ 
			\multicolumn{1}{|c|}{$10^{-10}$}  & 11   & 11   &  12 &  12  \\ 
			\multicolumn{1}{|c|}{$10^{-12}$}  & 6 &  7  &  7 &  8 \\ 
			\hline
		\end{tabular} \\ [1ex]
		(a)
\end{center}
\end{minipage}
&
\begin{minipage}{0.46\textwidth}
\begin{center}
		\begin{tabular}{|c|c|c|c|c|}  
			\cline{1-5}
		         $\nu$/$h$ & $1/16$ & $1/32$ & $1/64$ & $1/128$\\
			\cline{1-5}
			\hline
			\multicolumn{1}{|c|}{$0.1$}   & 18 & 20 &  21 & 22 \\ 		
			\multicolumn{1}{|c|}{$0.2$}   &  16& 17 & 18 &  19  \\ 
			\multicolumn{1}{|c|}{$0.3$}   &  13 &  14& 15 & 16 \\ 
			\multicolumn{1}{|c|}{$0.4$}  &  11  & 11 & 12 & 12\\ 
			\multicolumn{1}{|c|}{$0.49$}  & 12  & 11 & 9 & 8 \\ 
			\hline
		\end{tabular} \\ [1ex]
		(b)
\end{center}
\end{minipage}
\end{tabular}		
\caption{Number of iterations of Algorithm \eqref{it_pressure2}-\eqref{it_displacement2} with $\gamma=2/3$, $L=3\alpha^2/(2(\lambda+\mu))$, and $E=10^5$, using the stabilized P1-P1 discretization in two dimensions, for different mesh sizes $h$ and (a) different values of the hydraulic conductivity $K$ (with $\nu=0.4$) and (b) different values of Poisson ratio $\nu$ (with $K=10^{-10}$).}
\label{Tabla_BM_P1}
\end{table}

Next, we consider the stabilized MINI-element scheme. In this case, the one-dimensional study provided us the value of $\gamma=1$ as the optimal one for the convergence of the iterative method. Again, we consider two different values of the Poisson ratio: $\nu=0.2$ and $\nu=0.4$, and two different values of the hydraulic conductivity: $K=10^{-2}$ and  $K=10^{-10}$.  Figure \ref{Parameters_MINI}(a) displays the number of iterations of the iterative coupling method with $\nu=0.2$ and $h = 1/32$ for different values of parameter $\gamma$. Figure \ref{Parameters_MINI}(b) displays similar results for $\nu =0.4$ and grid size $h=1/64$.  We observe that although $\gamma = 1$ is not the optimal value as it was in the one-dimensional case, it does provide reasonably good results. 

\begin{figure}[h!]
\begin{tabular}{cc}
\includegraphics[width = 0.45\textwidth]{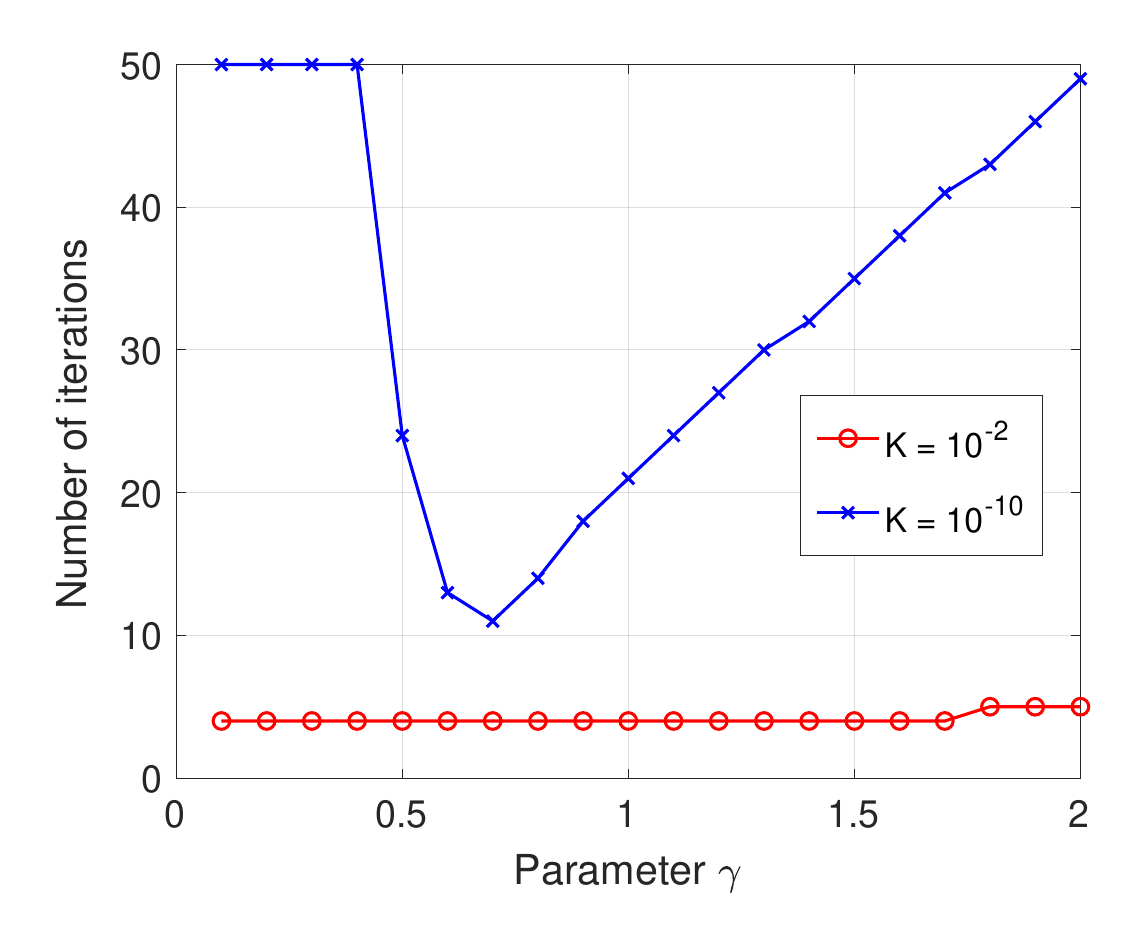}
&
\includegraphics[width = 0.45\textwidth]{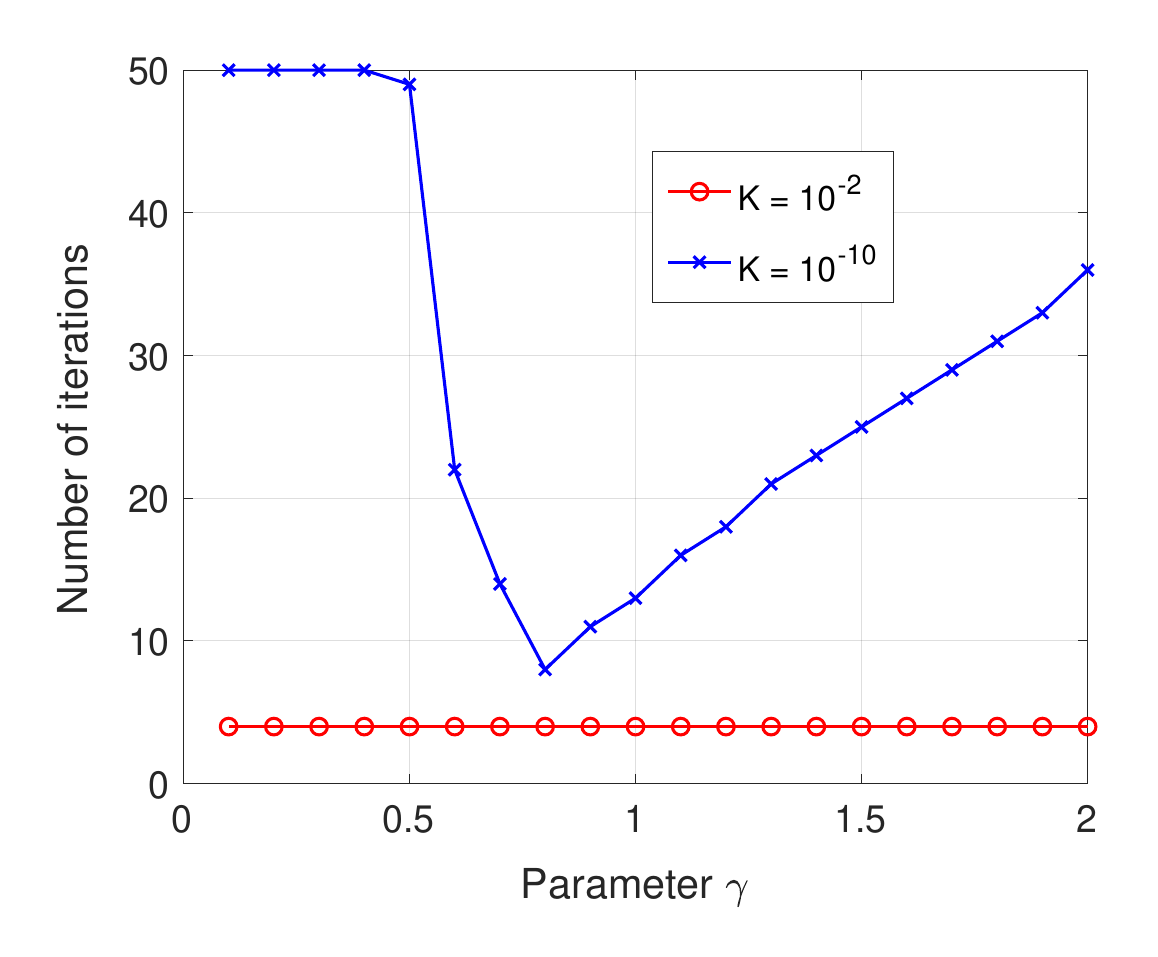}
\\
(a)
&
(b)
\end{tabular}
\caption{Number of iterations of the iterative coupling scheme for different values of $\gamma$, when the stabilized MINI-element discretization is considered on the Barry \& Mercer's problem. Two different test cases are shown: (a) $\nu=0.2$ and (b) $\nu=0.4$}\label{Parameters_MINI}
\end{figure}

Analogously to what was done for linear elements in Table \ref{Tabla_BM_P1}, we perform the same experiments for the stabilized MINI-element, shown in Table \ref{Tabla_BM_MINI}.  The number of iterations of Algorithm \eqref{it_pressure2}-\eqref{it_displacement2}, with the choice of $\gamma=1$, for several values of the hydraulic conductivity $K$ and the mesh size $h$ and for values of $\nu$ are shown. Again, we observe that the number of iterations stays relatively stable for all the different values of the considered parameters, demonstrating the robustness of the proposed iterative coupling method.
\begin{table}[htbp]
\begin{tabular}{cc}
\begin{minipage}{0.46\textwidth}
\begin{center}
		\begin{tabular}{|c|c|c|c|c|}  
			\cline{1-5}
		         $K$/$h$ & $1/16$ & $1/32$ & $1/64$ & $1/128$\\
			\cline{1-5}
			\hline
			\multicolumn{1}{|c|}{$10^{-2}$}   & 4 & 4 & 4 &  4 \\ 		
			\multicolumn{1}{|c|}{$10^{-4}$}   & 5 &  5 &  5 & 5   \\ 
			\multicolumn{1}{|c|}{$10^{-6}$}   & 10 &  10 & 10 &  10 \\ 
			\multicolumn{1}{|c|}{$10^{-8}$}  & 13 & 13  & 13 &   13  \\ 
			\multicolumn{1}{|c|}{$10^{-10}$}  & 14   & 13   & 13  &  13  \\ 
			\multicolumn{1}{|c|}{$10^{-12}$}  & 14 &  14 & 14 &  14 \\ 
			\hline
		\end{tabular}\\ [1ex]
		(a)
\end{center}
\end{minipage}
&
\begin{minipage}{0.46\textwidth}
\begin{center} 
		\begin{tabular}{|c|c|c|c|c|}  
			\cline{1-5}
		         $\nu$/$h$ & $1/16$ & $1/32$ & $1/64$ & $1/128$\\
			\cline{1-5}
			\hline
			\multicolumn{1}{|c|}{$0.1$}   & 25 & 25 &  25 &  24\\ 		
			\multicolumn{1}{|c|}{$0.2$}   & 21 & 21 & 21 & 21   \\ 
			\multicolumn{1}{|c|}{$0.3$}   & 17  & 17 & 17 & 17 \\ 
			\multicolumn{1}{|c|}{$0.4$}  &  14  & 13 & 13 & 13\\ 
			\multicolumn{1}{|c|}{$0.49$}  &  11 & 10 &  9 & 7 \\ 
			\hline
		\end{tabular} \\ [1ex]
		(b)
\end{center}
\end{minipage}
\end{tabular}
\caption{Number of iterations of Algorithm \eqref{it_pressure2}-\eqref{it_displacement2} with $\gamma=1$, $L=\alpha^2/(\lambda+\mu)$, and $E=10^5$, using the stabilized MINI-element scheme in two dimensions, for different mesh sizes $h$ and (a) different values of the hydraulic conductivity $K$ (with $\nu=0.4$) and (b) different values of Poisson ratio $\nu$ (with $K=10^{-10}$).}
\label{Tabla_BM_MINI}
\end{table}

\subsection{Three-dimensional footing problem}
To confirm the results in three-dimensions, we consider a footing problem, as seen in~\cite{NLA:NLA587}. The domain, illustrated in Figure \ref{fig:footing_domain}, is a unit
cube modeling a block of porous soil. A downward uniform load, $\boldsymbol{\sigma}_0$, of intensity $10^4$ per
unit area is applied in a square of size $0.5 \times 0.5$ in the middle of the top face. The
base of the domain is assumed to be fixed, while the rest of the domain is free to
drain. The material properties used are $\alpha = 1$, $\beta = 10^6$ and $E=10^4$.
In the following, we vary the hydraulic conductivity~$K$, and Poisson ratio~$\nu$.  
\begin{figure*}[ht!]
    \centering
        \includegraphics[scale=0.8]{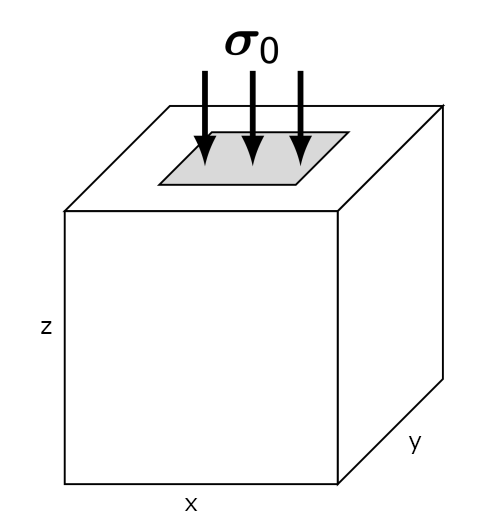}
        \caption{Footing problem domain.}\label{fig:footing_domain}
\end{figure*}


First, similar to the two-dimensional case, we perform several tests to show that the values of parameters $L$ and $\gamma$, which are optimal in the one-dimensional case, still provide almost optimal results in three dimensions.  For the P1-P1 discretization,  fixing $\nu = 0.2$ and $h=1/8$ and varying $K = 10^{-2}$ or $10^{-8}$, we show the number of iterations for different values of parameter $\gamma$ in Figure \ref{Parameters_P1_3D}(a).  In Figure \ref{Parameters_P1_3D}(b), we repeat the same experiments with $\nu = 0.4$.  We observe the same results as in two dimensions, namely, the choice of $\gamma = 2/3$ provides a nearly optimal result as expected. 

\begin{figure}[h!]
	\begin{tabular}{cc}
		\includegraphics[width = 0.45\textwidth]{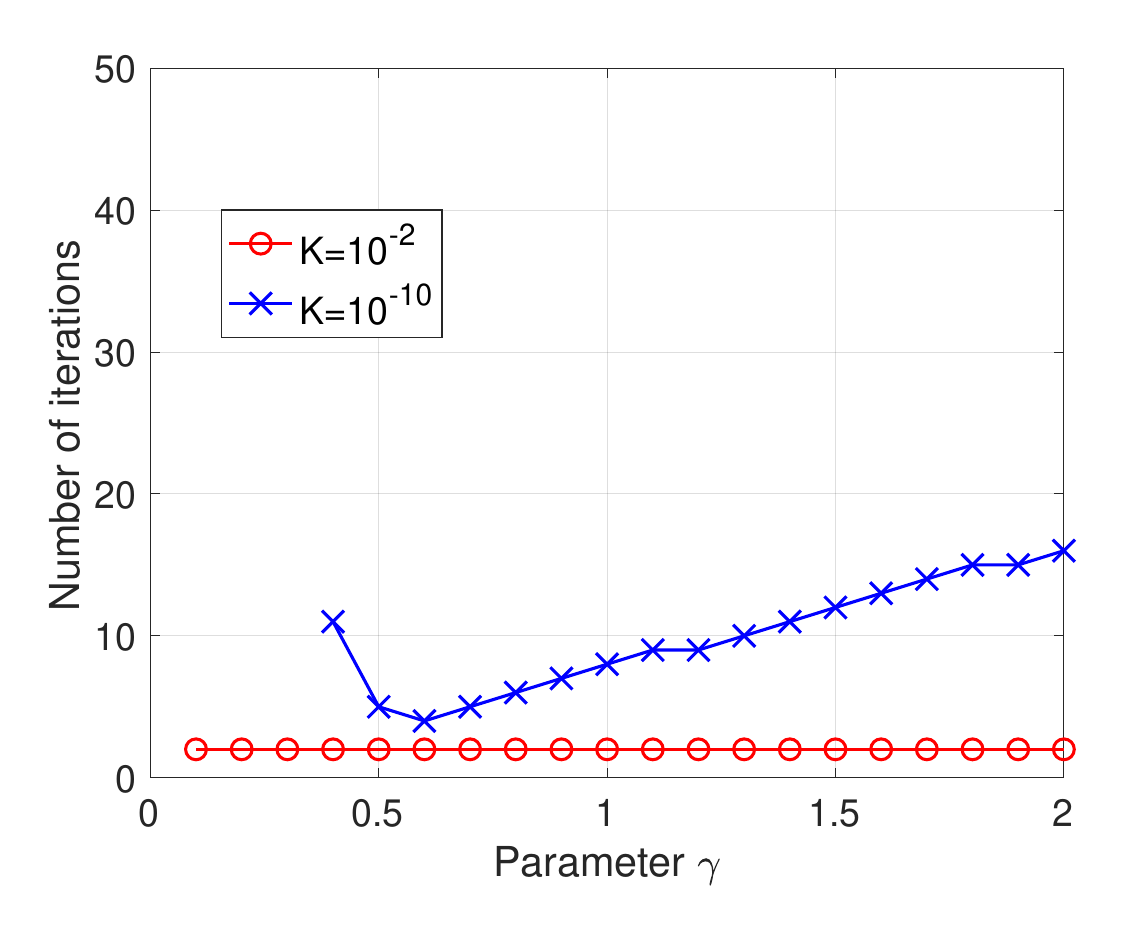}
		&
		\includegraphics[width = 0.45\textwidth]{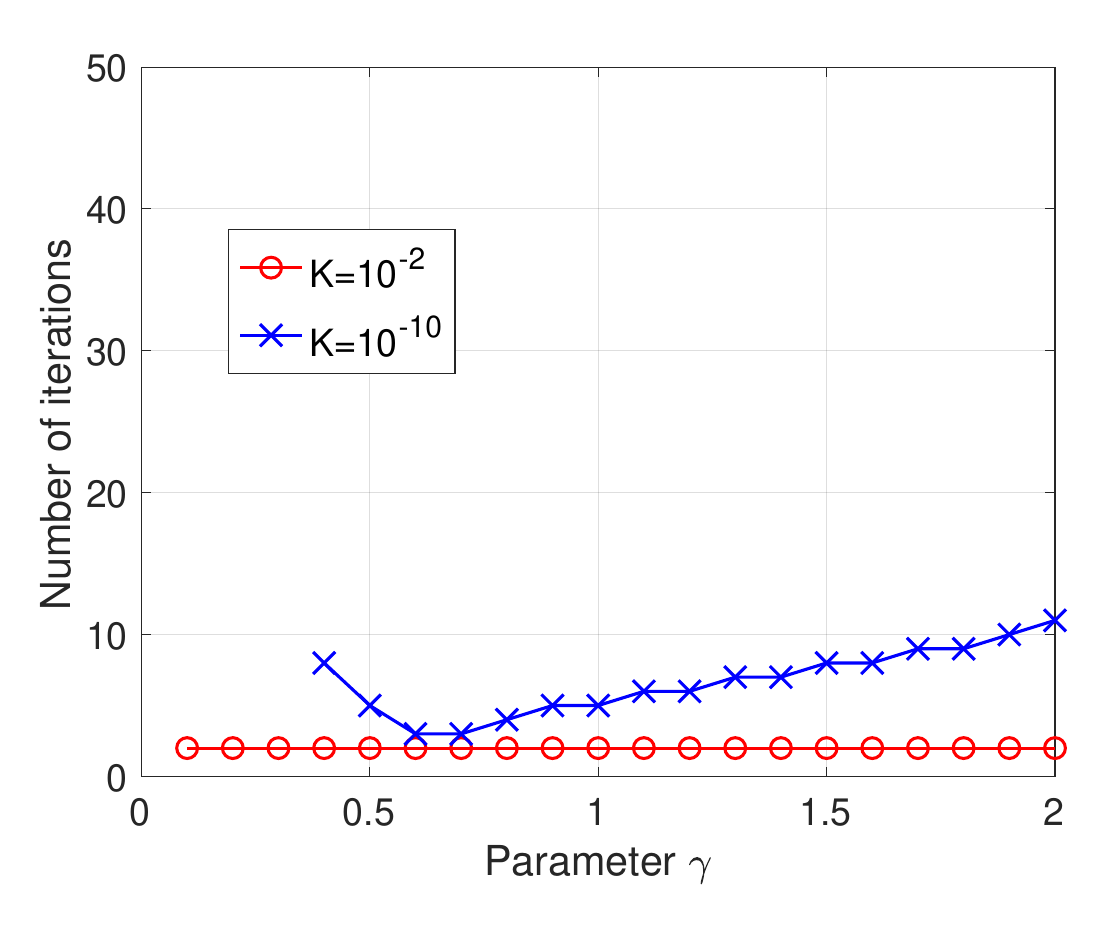}
		\\
		(a)
		&
		(b)
	\end{tabular}
	\caption{Number of iterations of the iterative coupling scheme for different values of $\gamma$, when the stabilized P1-P1 discretization is considered in three dimensions. Two different test cases are shown: (a) $\nu=0.2$ and (b) $\nu=0.4$.}\label{Parameters_P1_3D}
\end{figure}

We repeat the same experiments for the MINI element.  Again, we fix $h = 1/8$ and choose $\nu = 0.2$ or $0.4$ and $K = 10^{-2}$ or $10^{-10}$.  The number of iterations are reported in Figure \ref{Parameters_MINI_3D}.  The observation is basically the same as before and $\gamma=1$ is a nearly optimal choice in three dimensions as well.  

\begin{figure}[h!]
	\begin{tabular}{cc}
		\includegraphics[width = 0.45\textwidth]{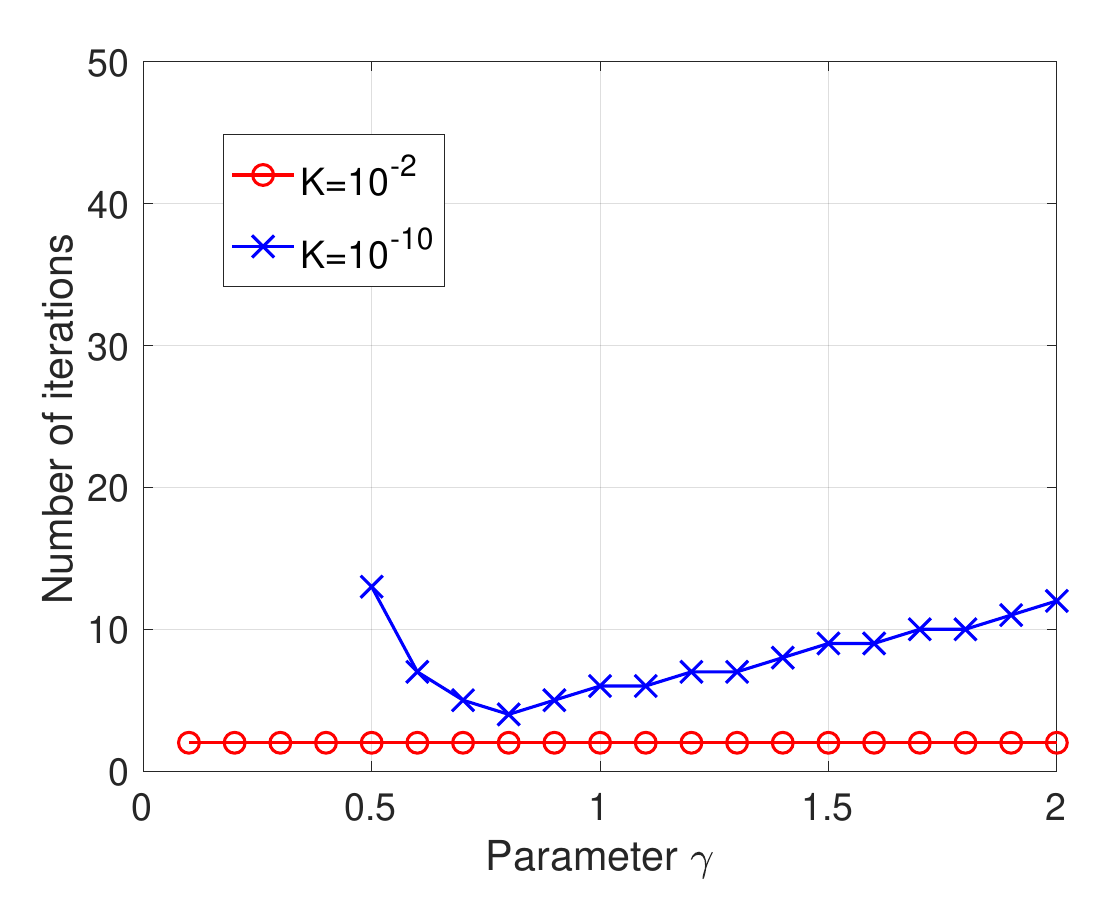}
		&
		\includegraphics[width = 0.45\textwidth]{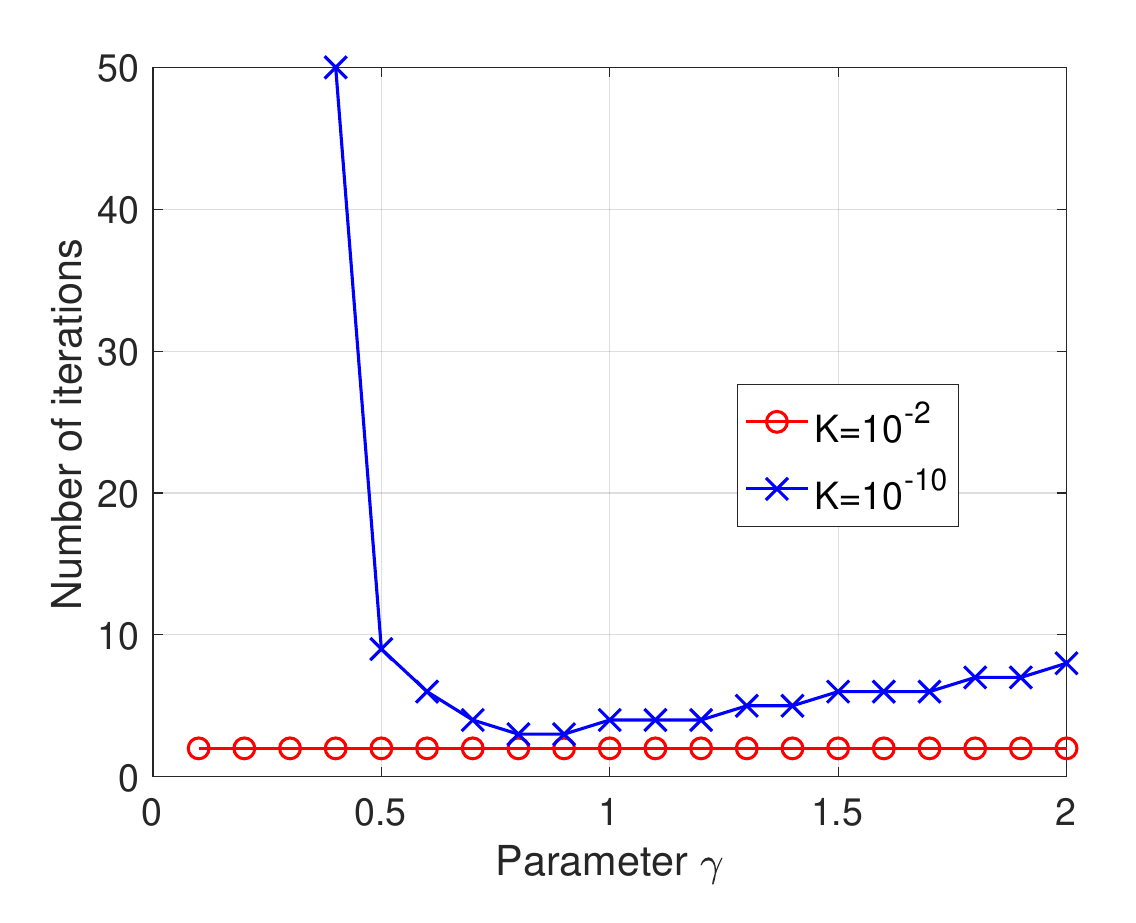}
		\\
		(a)
		&
		(b)
	\end{tabular}
	\caption{Number of iterations of the iterative coupling scheme for different values of $\gamma$, when the stabilized MINI-element discretization is considered in three dimensions. Two different test cases are shown: (a) $\nu=0.2$ and (b) $\nu=0.4$}\label{Parameters_MINI_3D}
\end{figure}

Next, we fix $\gamma = 2/3$ for the P1-P1 discretization and $\gamma = 1$ for the MINI element and test the robustness of the proposed iterative scheme with respect to $h$, $K$, and $\nu$.  From the results in Table \ref{Tabla_BM_P1_3D}, we see that the number of iterations is quite stable with respect to $h$, $K$, and $\nu$ for the P1-P1 element.  In addition, results in Table \ref{Tabla_BM_MINI_3D} demonstrate the robustness of the iterative method for the MINI-element.  Overall, we see that our proposed iterative scheme is robust with respect to all the parameters, which demonstrates its effectiveness in practical applications. 

\begin{table}[htbp]
\begin{tabular}{cc}
\begin{minipage}{0.45\textwidth}
\begin{center}
		\begin{tabular}{|c|c|c|c|}  
			\cline{1-4}
			$K$/$h$ & $1/4$ & $1/8$ & $1/16$ \\
			\cline{1-4}
			\hline
			\multicolumn{1}{|c|}{$10^{-2}$}   & 2 & 2 & 2 \\ 		
			\multicolumn{1}{|c|}{$10^{-4}$}   & 2 &  2 & 2    \\ 
			\multicolumn{1}{|c|}{$10^{-6}$}   & 3 & 3  & 3  \\ 
			\multicolumn{1}{|c|}{$10^{-8}$}  & 3 & 3  &  3    \\ 
			\multicolumn{1}{|c|}{$10^{-10}$}  & 3   & 3   &  3   \\ 
			\multicolumn{1}{|c|}{$10^{-12}$}  & 3 &  3  &  3 \\ 
			\hline
		\end{tabular} \\ [1ex]
		(a)
\end{center}
\end{minipage}
&
\begin{minipage}{0.45\textwidth}
\begin{center}
		\begin{tabular}{|c|c|c|c|}  
			\cline{1-4}
			$\nu$/$h$ & $1/4$ & $1/8$ & $1/16$ \\
			\cline{1-4}
			\hline
			\multicolumn{1}{|c|}{$0.1$}   & 5 & 6 &  6  \\ 		
			\multicolumn{1}{|c|}{$0.2$}   &  4& 5 & 5   \\ 
			\multicolumn{1}{|c|}{$0.3$}   &  4 &  4& 4  \\ 
			\multicolumn{1}{|c|}{$0.4$}  &  3  & 3 & 3 \\ 
			\multicolumn{1}{|c|}{$0.49$}  & 2  & 2 & 2  \\ 
			\hline
		\end{tabular} \\ [1ex]
		(b)
\end{center}
\end{minipage}
\end{tabular}
\caption{Number of iterations of Algorithm \eqref{it_pressure2}-\eqref{it_displacement2} with $\gamma=2/3$, $L=3\alpha^2/(2(\lambda+(2\mu/3)))$, and $E=10^5$, using the stabilized P1-P1 discretization in three dimensions, for different values of mesh size $h$ and (a) different values of the hydraulic conductivity $K$ (with $\nu=0.4$) and (b) different values of Poisson ratio $\nu$ (with $K=10^{-10}$).}
\label{Tabla_BM_P1_3D}
\end{table}

\begin{table}[htbp]
\begin{tabular}{cc}
\begin{minipage}{0.45\textwidth}
\begin{center}
		\begin{tabular}{|c|c|c|c|}  
			\cline{1-4}
			$K$/$h$ & $1/4$ & $1/8$ & $1/16$\\
			\cline{1-4}
			\hline
			\multicolumn{1}{|c|}{$10^{-2}$}   & 2 & 2 & 2  \\ 		
			\multicolumn{1}{|c|}{$10^{-4}$}   & 2 &  2 &  2   \\ 
			\multicolumn{1}{|c|}{$10^{-6}$}   & 3 &  3 & 3  \\ 
			\multicolumn{1}{|c|}{$10^{-8}$}  & 4 & 4  & 3   \\ 
			\multicolumn{1}{|c|}{$10^{-10}$}  & 4   & 4   & 3   \\ 
			\multicolumn{1}{|c|}{$10^{-12}$}  & 4 &  4 & 3  \\ 
			\hline
		\end{tabular} \\ [1ex]
		(a)
\end{center}
\end{minipage}
&
\begin{minipage}{0.45\textwidth}
\begin{center}
		\begin{tabular}{|c|c|c|c|}  
			\cline{1-4}
			$\nu$/$h$ & $1/4$ & $1/8$ & $1/16$ \\
			\cline{1-4}
			\hline
			\multicolumn{1}{|c|}{$0.1$}   & 6 & 6&  7 \\ 		
			\multicolumn{1}{|c|}{$0.2$}   & 5 & 6 & 5    \\ 
			\multicolumn{1}{|c|}{$0.3$}   & 4  & 5 & 4  \\ 
			\multicolumn{1}{|c|}{$0.4$}  &  4  & 4 & 3 \\ 
			\multicolumn{1}{|c|}{$0.49$}  &  2 & 2 &  2  \\ 
			\hline
		\end{tabular} \\ [1ex]
		(b)
\end{center}
\end{minipage}
\end{tabular}
\caption{Number of iterations of Algorithm \eqref{it_pressure2}-\eqref{it_displacement2} with $\gamma=1$, $L=\alpha^2/(\lambda+(2\mu/3))$, and $E=10^5$, using the stabilized MINI-element scheme in three dimensions, for different mesh sizes $h$ and (a) different values of the hydraulic conductivity $K$ (with $\nu=0.4$) and (b) different values of Poisson ratio $\nu$ (with $K=10^{-10}$).}
\label{Tabla_BM_MINI_3D}
\end{table}

\section{More general fluid regimes}\label{sec:two_parameters} 

In this section, we generalize the proposed strategy for a more general case which includes applications where $\beta^{-1}$ is not close to zero. Here, the term $\beta^{-1}$ affects the choice of the stabilization coefficient, $L$. In particular, the optimal values of $L$ to remove the oscillations in the one-dimensional Terzaghi's problem are given by
$$L = \displaystyle \left\{ \begin{array}{ll} \displaystyle \frac{3\alpha^2}{2(\lambda+2\mu)} + \frac{1}{\beta}, & \hbox{for P1-P1,} \\  \displaystyle \frac{\alpha^2}{\lambda+2\mu} + \frac{1}{\beta}, & \hbox{for MINI-element.}  \end{array}\right.$$\\

With the values of $L$ given above, it is not possible to find a parameter $\gamma$ in~\eqref{splitting_gamma} so that the splitting has the form in~\eqref{splitting}, yielding an optimal iterative coupling method that converges in two iterations. However, we can include a second parameter into the iterative coupling method so that we retain the required splitting form in~\eqref{splitting}. To do so, we consider two parameters $\gamma_1, \gamma_2 >0$, and if  ${\bm u}_h^{n,0} = {\bm u}_h^{n-1}$ and  $p_h^{n,0} = p_h^{n-1}$ are the initial guesses of the iterative scheme at time $t_n$, the new improved algorithm provides a sequence of approximations $({\bm u}_h^{n,i},p_h^{n,i}), i \geq 1$ as follows: \\

\noindent {\bf Step 1:} Given $({\bm u}_h^{n,i-1},p_h^{n,i-1}) \in {\bm V}_h \times Q_h$, find $p_h^{n,i} \in Q_h$ such that:
\begin{eqnarray}  
  &&  \frac{1}{\beta} \left(\frac{p_h^{n,i}-p_h^{n-1}}{\tau},q_h\right) + \gamma_1 L \left(\frac{p_h^{n,i}-p_h^{n-1}}{\tau},q_h\right)_0 - \gamma_2 L \left(\frac{p_h^{n,i}-p_h^{n-1}}{\tau},q_h\right) + a_p(p_h^{n,i},q_h) = \nonumber \\
  & &\qquad\qquad + 
(1-\gamma_2) L \left(\frac{p_h^{n,i-1}-p_h^{n-1}}{\tau},q_h\right) + (\gamma_1-1) L \left(\frac{p_h^{n,i-1}-p_h^{n-1}}{\tau},q_h\right)_0 \nonumber \\ 
& & \qquad\qquad\qquad-\alpha \left(\ddiv \frac{\bm{u}_h^{n,i-1}-\bm{u}_h^{n-1}}{\tau},q_h\right) +  (g_h^n,q_h),
   \quad \forall \ q_h \in Q_h, \label{it_pressure3} 
\end{eqnarray}
{\bf Step 2:} Given $p_h^{n,i} \in Q_h$, find ${\bm u}_h^{n,i} \in {\bm V}_h$ such that
\begin{equation}
a(\bm{u}_h^{n,i},\bm{v}_h) =  \alpha( p_h^{n,i}, \ddiv \bm{v}_h) + (\bm{f}_h^n,\bm{v}_h),
     \quad \forall \  \bm{v}_h \in \bm V_h.  \label{it_displacement3}
\end{equation}

\noindent Including both parameters $\gamma_1$ and $\gamma_2$, the matrix form of the proposed iterative method is based on the following splitting of $\mathcal{A}_{stab}$,                               
\begin{equation}\label{splitting_gamma2}
\mathcal{A}_{stab} \!=\! 
\begin{pmatrix}\!
A& G\\
0 &  \tau A_p + \beta^{-1} M + \gamma_1 L M_l -  \gamma_2 L M 
\!\end{pmatrix}\;
\! \!- \! \begin{pmatrix}\!
0& 0\\
-D &  (\gamma_1-1) L M_l - (\gamma_2-1) L M 
\!\end{pmatrix}\!.
\end{equation}

Again, we prove the convergence of the new iterative method for a set of values for both parameters $\gamma_1$ and $\gamma_2$.
Since the constants arising from this analysis are independent of any physical and discretization parameters, we obtain an iterative coupling method which is parameter-robust. 
Let again $e_u^i = \bm{u}_h^{n,i} - \bm{u}_h^n$ and $e_p^i = p_h^{n,i}-p_h^n$ denote the errors at iteration $i$  for the displacements and for the pressure, respectively.\\

\begin{theorem} \label{thm:two-gammas}
	If the weak inf-sup condition~\eqref{ine:weak-inf-sup} holds, the iterative method given in \eqref{it_pressure3}-\eqref{it_displacement3} 
	is convergent for any parameters $\gamma_1 \in (1/2,2]$, $\gamma_1 > \gamma_2 \geq 0$, and $L$ such that  $(\gamma_1 -\gamma_2) L = \omega \frac{\alpha^2}{ (\lambda+2\mu/d)} \geq \frac{\alpha^2}{(\lambda+2\mu/d)}$, i.e., $\omega \geq 1$. Additionally, 
	\begin{equation}
		\label{thm_contraction_pressure_4}
		\| e_p^i \|^2  + \frac{|1-\gamma_1|}{\gamma_1-\gamma_2} \| e_p^i \|_Z^2   \leq  \frac{1}{ 1+  \frac{\eta^2}{ \omega (1 + 2  \theta^*) } } \left(  \| e_p^{i-1} \|^2 + \frac{|1-\gamma_1|}{\gamma_1-\gamma_2} \| e_p^{i-1} \|_Z^2  \right),
	\end{equation}
	where $\theta^* \geq \frac{\epsilon^2 C_2 \gamma_1}{4\omega (\gamma_1 - |1-\gamma_1|) }$ is a root of the quadratic equation~\eqref{def:q2} (with $\gamma$ replaced by $\gamma_1$).    Here, $\eta > 0$ is the constant appearing in the weak inf-sup condition~\eqref{ine:weak-inf-sup} and $C_2$ is the constant for the upper bound of the spectral equivalence condition~\eqref{ine:spec-equiv}.
\end{theorem}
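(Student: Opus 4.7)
The plan is to mirror the proof of Theorem~\ref{thm:robust-convergence-one-gamma}, carefully tracking the asymmetry between $\gamma_1$ (which multiplies $M_l$ on the left) and $\tilde\gamma:=\gamma_1-\gamma_2$ (which controls the absorption of $\alpha^2/(\lambda+2\mu/d)$). First I would derive the error equations by subtracting the fixed-point version of~\eqref{it_pressure3}--\eqref{it_displacement3} from the iterates themselves; one verifies that at the fixed point the iteration reduces to~\eqref{stab1}--\eqref{stab2}. After multiplying the pressure error equation by $\tau$, testing it with $q_h=e_p^i$, testing the displacement error equation $a(e_{\bm u}^i,\bm v_h)=\alpha(e_p^i,\ddiv\bm v_h)$ with $\bm v_h=e_{\bm u}^{i-1}$, and using the latter to replace $\alpha(\ddiv e_{\bm u}^{i-1},e_p^i)$ by $a(e_{\bm u}^i,e_{\bm u}^{i-1})$, the decomposition $M_l=M+Z$ produces
\begin{equation*}
\left[\tfrac1\beta+\tilde\gamma L\right]\|e_p^i\|^2+\gamma_1 L\|e_p^i\|_Z^2+\tau\|e_p^i\|_{A_p}^2+a(e_{\bm u}^i,e_{\bm u}^{i-1})=\tilde\gamma L(e_p^{i-1},e_p^i)+(\gamma_1-1)L(e_p^{i-1},e_p^i)_Z,
\end{equation*}
which is strictly analogous to~\eqref{first_inequality_31} but with $\tilde\gamma L$ and $\gamma_1 L$ playing distinct roles.

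Next I would apply the polarization identity to $a(e_{\bm u}^i,e_{\bm u}^{i-1})$ and to $\tilde\gamma L(e_p^{i-1},e_p^i)$ and Young's inequality to the $Z$-cross term $(\gamma_1-1)L(e_p^{i-1},e_p^i)_Z$, generating a spare $-\tfrac12\|e_{\bm u}^i-e_{\bm u}^{i-1}\|_A^2$ on the left. This residual is controlled exactly as in~\eqref{ine:mech_ineq3}: subtracting the displacement error equation at steps $i$ and $i-1$, testing with $\bm v_h=e_{\bm u}^i-e_{\bm u}^{i-1}$, and using $a(\bm u,\bm u)\ge(\lambda+2\mu/d)\|\ddiv\bm u\|^2$ gives $\|e_{\bm u}^i-e_{\bm u}^{i-1}\|_A\le\tfrac{\alpha}{\sqrt{\lambda+2\mu/d}}\|e_p^i-e_p^{i-1}\|$. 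A matching lower bound on $\|e_{\bm u}^i\|_A$ is then produced from the weak inf-sup condition~\eqref{ine:weak-inf-sup} applied with $q=e_p^i$, testing the displacement error equation against the resulting $\bm w_h$, and invoking Young's inequality with a free parameter $\theta>0$ as in~\eqref{ine:low-bound_eu_A_norm}; the $h^2\|\nabla e_p^i\|^2$ factor that appears is then replaced by $C_2\|e_p^i\|_Z^2$ via~\eqref{ine:spec-equiv}.

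Finally, I would substitute these bounds, drop the nonnegative terms $\tfrac12\|e_{\bm u}^{i-1}\|_A^2$, $\tfrac1\beta\|e_p^i\|^2$, $\tau\|e_p^i\|_{A_p}^2$, and use the hypothesis $\tilde\gamma L=\omega\alpha^2/(\lambda+2\mu/d)$ with $\omega\ge1$ to absorb the $\tfrac12\tfrac{\alpha^2}{\lambda+2\mu/d}\|e_p^i-e_p^{i-1}\|^2$ term into $\tfrac{\tilde\gamma L}{2}\|e_p^i-e_p^{i-1}\|^2$. Dividing by $L$, one reaches an inequality of the form
\begin{equation*}
\tfrac{\tilde\gamma}{2}\left[1+\tfrac{\eta^2}{\omega(1+2\theta)}\right]\|e_p^i\|^2+\left[\gamma_1-\tfrac{|1-\gamma_1|}{2}-\tfrac{\epsilon^2 C_2\tilde\gamma}{4\omega\theta}\right]\|e_p^i\|_Z^2\le\tfrac{\tilde\gamma}{2}\|e_p^{i-1}\|^2+\tfrac{|1-\gamma_1|}{2}\|e_p^{i-1}\|_Z^2.
\end{equation*}
Equating the ratio of $\|e_p^i\|_Z^2$- to $\|e_p^i\|^2$-coefficients on the left with the ratio $|1-\gamma_1|/\tilde\gamma$ present on the right produces the quadratic~\eqref{def:q2} (read with $\gamma\leftarrow\gamma_1$), whose admissible positive root $\theta^*\ge\epsilon^2 C_2\gamma_1/[4\omega(\gamma_1-|1-\gamma_1|)]$ exists because the leading coefficient $2(\gamma_1-|1-\gamma_1|)/\gamma_1>0$ for $\gamma_1\in(1/2,2]$ and $q_2$ is nonpositive at the indicated lower bound; the contraction~\eqref{thm_contraction_pressure_4} follows. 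The principal obstacle I anticipate is the bookkeeping of the asymmetry between $\gamma_1$ and $\tilde\gamma$: these parameters now play genuinely different roles and must be kept distinct through every polarization and Young step so that the combined norm naturally groups as $\|\cdot\|^2+(|1-\gamma_1|/\tilde\gamma)\|\cdot\|_Z^2$ and the quadratic governing $\theta^*$ aligns with the corresponding one-gamma expression.
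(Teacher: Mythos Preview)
Your proposal is correct and follows essentially the same route as the paper's own proof, which explicitly invokes the procedure of Theorem~\ref{thm:robust-convergence-one-gamma} with $\tilde\gamma=\gamma_1-\gamma_2$ playing the role of $\gamma$ in the $M$-terms and $\gamma_1$ in the $Z$-terms. The only minor point to watch is that your displayed inequality carries $\epsilon^2C_2\tilde\gamma/(4\omega\theta)$ in the $\|e_p^i\|_Z^2$-coefficient, whereas to land exactly on $q_2$ with $\gamma\leftarrow\gamma_1$ as stated you should weaken this via $\tilde\gamma\le\gamma_1$ (the paper writes the coefficient as $(1-\tfrac{\epsilon^2C_2}{4\omega\theta})\gamma_1-\tfrac{|1-\gamma_1|}{2}$); otherwise the quadratic you obtain for $\theta^*$ differs slightly from~\eqref{def:q2}.
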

\begin{proof}
By following the same procedure of the proof of Theorem~\ref{thm:robust-convergence-one-gamma}, using the fact that $\gamma_1 > \gamma_2$, if $(\gamma_1 - \gamma_2) L = \omega  \frac{\alpha^2}{\lambda + 2\mu/d} \geq \frac{\alpha^2}{\lambda + 2\mu/d}$, i.e., $\omega \geq 1$, we have
	\begin{align*}
		& \quad \left( \frac{1}{2} \frac{\eta^2}{1 + 2 \theta} \frac{(\gamma_1 - \gamma_2)}{\omega}  + \frac{(\gamma_1 - \gamma_2)}{2} \right) \| e_p^i \|^2 +\left( (1 - \frac{\epsilon^2 C_2}{4 \omega \theta}) \gamma_1 - \frac{|1-\gamma_1|}{2}  \right)  \| e_p^i \|_Z^2 \\
		& \leq \frac{(\gamma_1 - \gamma_2) }{2} \| e_p^{i-1} \|^2 + \frac{|1-\gamma_1|}{2} \| e_p^{i-1} \|_Z^2,
	\end{align*}
	where, again, $\theta>0$ is a constant when using Young's inequality.  To ensure convergence, it is sufficient that
	\begin{equation*}
		\left( (1 - \frac{\epsilon^2 C_2}{4 \omega \theta}) \gamma_1 - \frac{|1-\gamma_1|}{2}  \right) > \frac{|1-\gamma_1| }{2} \Longrightarrow \theta \geq \frac{\epsilon^2 C_2 \gamma_1}{4 \omega (\gamma_1 - |1-\gamma_1|)}.
	\end{equation*}
	Similar to Theorem~\ref{thm:robust-convergence-one-gamma}, we try to find $\theta = \theta^* \geq \frac{\epsilon^2 C_2 \gamma_1}{4 \omega (\gamma_1 - |1-\gamma_1|)}$ such that
	\begin{equation*}
		\frac{(1 - \frac{\epsilon^2 C_2}{4 \omega \theta}) \gamma_1 - \frac{|1-\gamma_1|}{2}}{\frac{1}{2} \frac{\eta^2}{1 + 2 \theta} \frac{(\gamma_1 - \gamma_2)}{\omega}  + \frac{(\gamma_1 - \gamma_2)}{2}} = \frac{|1-\gamma_1|}{\gamma_1 - \gamma_2}.
	\end{equation*}
	Direct calculations show that $\theta^* \geq  \frac{\epsilon^2 C_2 \gamma_1}{4 \omega (\gamma_1 - |1-\gamma_1|)}$ is a positive root of the quadratic equation~\eqref{def:q2} (with $\gamma$ replaced by $\gamma_1$).  The existence of $\theta^*$ follows by the same argument used in the proof of Theorem~\ref{thm:robust-convergence-one-gamma}. Thus, we have
	\begin{equation*}
		\left( \frac{\eta^2 }{2 \omega (1 + 2 \theta^*) } + \frac{1 }{2} \right) \left( \| e_p^i \|^2  + \frac{|1-\gamma_1|}{\gamma_1-\gamma_2} \| e_p^i \|_Z^2 \right)  \leq  \frac{1}{2} \left(  \| e_p^i \|^2 + \frac{|1-\gamma_1|}{\gamma_1-\gamma_2}  \| e_p^{i-1} \|_Z^2  \right),
	\end{equation*}
	which completes the proof.
	\end{proof}

In order to fix parameters $\gamma_1$ and $\gamma_2$ we consider again the one-dimensional Terzaghi's problem and use Theorem~\ref{thm:optimal}. As before, the values of these parameters are fixed depending on the considered discretization, yielding an optimal method at least for one-dimensional problems. 

\begin{corollary}\label{cor_1b}
Iterative method \eqref{it_pressure3}-\eqref{it_displacement3} with parameters  $L = \frac{3\alpha^2}{2(\lambda+2\mu)}+\frac{1}{\beta}$, $\gamma_1 = 1- \frac{1}{2L(\lambda+2\mu)}$, and $\gamma_2 = 1-\frac{3}{2L(\lambda+2\mu)}$ converges in two iterations for the stabilized P1-P1 discretization of Terzaghi's problem.
\end{corollary}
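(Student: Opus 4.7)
The plan is to mirror the strategy used in the proof of Corollary~\ref{cor_1}: compute the Schur complement $S_p = C - D A^{-1} G$ for the stabilized one-dimensional P1-P1 discretization of Terzaghi's problem explicitly, then verify that, for the announced choice of $L$, $\gamma_1$, and $\gamma_2$, the splitting \eqref{splitting_gamma2} of $\mathcal{A}_{stab}$ coincides with the optimal splitting \eqref{splitting} of Theorem~\ref{thm:optimal}. Convergence in two iterations would then be immediate from Theorem~\ref{thm:optimal}.

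First I would observe that the quantity $-D A^{-1} G$ does not depend on $\beta$: it is determined solely by the mechanics bilinear form $a(\cdot,\cdot)$ and the coupling term $\alpha(p,\ddiv\bm{v})$. Consequently the computation carried out inside the proof of Corollary~\ref{cor_1} carries over verbatim to Terzaghi's problem with $1/\beta>0$ (and $\alpha=1$), so that
\[
 -D A^{-1} G \;=\; \tfrac{1}{\lambda+2\mu}\bigl(\tfrac{3}{2}M - \tfrac{1}{2}M_l\bigr).
\]
Since for the two-parameter scheme one still has $C = \tau A_p + \beta^{-1} M + L(M_l - M)$, the identity $S_p - C = -D A^{-1} G$ is unchanged.

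The core of the argument is then a coefficient-matching step. Splitting \eqref{splitting_gamma2} has the form \eqref{splitting} if and only if the subtracted off-diagonal block equals $S_p-C$; equivalently,
\[
 (\gamma_1-1) L\, M_l \;-\; (\gamma_2-1) L\, M \;=\; -\tfrac{1}{2(\lambda+2\mu)} M_l \;+\; \tfrac{3}{2(\lambda+2\mu)} M.
\]
Matching the $M_l$ coefficient gives $(1-\gamma_1) L = \tfrac{1}{2(\lambda+2\mu)}$, i.e.\ $\gamma_1 = 1 - \frac{1}{2L(\lambda+2\mu)}$, and matching the $M$ coefficient gives $(1-\gamma_2) L = \tfrac{3}{2(\lambda+2\mu)}$, i.e.\ $\gamma_2 = 1 - \frac{3}{2L(\lambda+2\mu)}$, which are exactly the values stated in the corollary. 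With these choices the upper block of splitting \eqref{splitting_gamma2} coincides with $S_p$, so Theorem~\ref{thm:optimal} applies and the scheme converges in two iterations.

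I do not foresee an essential obstacle: the Schur-complement calculation has already been done in the $1/\beta=0$ case, and the remaining step is a straightforward comparison of two pairs of scalar coefficients. The only conceptual point worth emphasising is why a second parameter is genuinely needed here. The value $L = \frac{3\alpha^2}{2(\lambda+2\mu)} + \frac{1}{\beta}$ is dictated by the oscillation-removal requirement and therefore is no longer available as a free tuning parameter; the extra degree of freedom introduced by $\gamma_2$ in \eqref{splitting_gamma2} is precisely what compensates for the additional $\beta^{-1} M$ contribution in $C$, restoring the possibility of matching the optimal splitting \eqref{splitting}. This is exactly the motivation given at the start of Section~\ref{sec:two_parameters}.
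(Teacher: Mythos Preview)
Your proposal is correct and follows essentially the same approach as the paper: the paper's proof sketch for Corollary~\ref{cor_1b} likewise invokes the explicit Schur complement from Corollary~\ref{cor_1} and verifies that, for the stated $L$, $\gamma_1$, $\gamma_2$, the splitting~\eqref{splitting_gamma2} coincides with~\eqref{splitting}, after which Theorem~\ref{thm:optimal} yields convergence in two iterations. Your version is in fact more detailed than the paper's, which merely states that the proof is analogous to Corollary~\ref{cor_1} without writing out the coefficient matching.
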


\begin{corollary}\label{cor_2b}
Iterative method \eqref{it_pressure3}-\eqref{it_displacement3} with parameters  $ L = \frac{\alpha^2}{\lambda+2\mu}+\frac{1}{\beta}$,  $\gamma_1= 1$, and $\gamma_2 =  \frac{1/\beta}{L}$ converges in two iterations for the stabilized MINI-element discretization of the Terzaghi's problem. 
\end{corollary}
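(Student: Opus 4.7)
The plan is to invoke Theorem~\ref{thm:optimal}. It suffices to exhibit values of $L$, $\gamma_1$, and $\gamma_2$ for which the splitting~\eqref{splitting_gamma2} of $\mathcal{A}_{stab}$ coincides with the exact block-triangular splitting~\eqref{splitting} whose pressure block is the Schur complement $S_p = C - DA^{-1}G$. Once these equal splittings are produced, Theorem~\ref{thm:optimal} immediately yields convergence in two iterations.

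First, I would compute $S_p - C = -DA^{-1}G$ for the one-dimensional Terzaghi problem discretized with the stabilized MINI-element. This is exactly the calculation carried out in the proof of Corollary~\ref{cor_2}, and it produces
\begin{equation*}
-DA^{-1}G \;=\; \frac{\alpha^{2}}{\lambda + 2\mu}\,M.
\end{equation*}
The key observation is that the matrices $A$, $D$, and $G$ arise only from the elasticity bilinear form and the coupling term, so they are independent of $\beta$; hence the identity above remains valid in the regime where $1/\beta$ is not negligible. Consequently,
\begin{equation*}
S_p \;=\; \tau A_p + \beta^{-1} M + L(M_l - M) + \frac{\alpha^{2}}{\lambda + 2\mu}\,M.
\end{equation*}

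Next, I would match this expression against the upper-triangular pressure block from~\eqref{splitting_gamma2}, namely $\tau A_p + \beta^{-1} M + \gamma_1 L M_l - \gamma_2 L M$. Equating coefficients of $M_l$ forces $\gamma_1 L = L$, so $\gamma_1 = 1$. Equating coefficients of $M$ forces $-\gamma_2 L = -L + \frac{\alpha^2}{\lambda+2\mu}$, i.e.\ $\gamma_2 L = L - \frac{\alpha^2}{\lambda+2\mu}$. With the prescribed choice $L = \frac{\alpha^{2}}{\lambda+2\mu} + \frac{1}{\beta}$, this last identity reduces to $\gamma_2 L = 1/\beta$, which gives $\gamma_2 = (1/\beta)/L$, exactly as stated in the corollary. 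At this point splittings~\eqref{splitting_gamma2} and~\eqref{splitting} coincide, and Theorem~\ref{thm:optimal} closes the argument.

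The proof is essentially a direct verification, so there is no substantial obstacle beyond bookkeeping. The only conceptual point worth emphasizing is that incorporating the storage term $\beta^{-1} M$ into the diagonal block $C$ does not perturb the computation of $-DA^{-1}G$; the entire effect of the parameter $\beta$ is localized in $C$ and in the new term in $L$, which is precisely why a second tuning parameter $\gamma_2$ is needed (and sufficient) to recover the optimal two-iteration splitting in this more general regime.
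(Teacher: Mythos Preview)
Your proposal is correct and follows essentially the same approach as the paper: explicitly compute the Schur complement (borrowing $-DA^{-1}G=\frac{\alpha^2}{\lambda+2\mu}M$ from Corollary~\ref{cor_2}, which is unaffected by $\beta$), match the pressure block in~\eqref{splitting_gamma2} with $S_p$ to determine $\gamma_1$ and $\gamma_2$, and then invoke Theorem~\ref{thm:optimal}. Your write-up is in fact more detailed than the paper's, which simply states that the proof proceeds as in Corollary~\ref{cor_2} by checking that the splitting~\eqref{splitting_gamma2} matches~\eqref{splitting}.
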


The proofs of Corollary~\ref{cor_1b} and Corollary~\ref{cor_2b} are similar to those of Corollary~\ref{cor_1} and Corollary~\ref{cor_2}, respectively. The corresponding Schur complements $S_p$ are explicitly computed and choosing the given values of parameters $L$, $\gamma_1$, and $\gamma_2$ we observe that the splitting corresponding to the iterative method \eqref{it_pressure3}-\eqref{it_displacement3}  given in \eqref{splitting_gamma2} matches with the decomposition \eqref{splitting}  in Theorem~\ref{thm:optimal}, and, therefore, such schemes converge in only two iterations.

\section{Conclusions}\label{sec:conclusions}
In this work, we have dealt with the two crucial aspects appearing in the numerical simulation of Biot's model, namely the choice of an appropriate discretization approach which provides solutions without nonphysical oscillations and the efficient and robust solution of the resulting large linear system of equations. We have proposed a novel stabilization strategy which has been applied to two different discretizations of the problem: the P1-P1 and MINI-element schemes. Such a stabilization presents several advantages over others.  First, it does not depend on the mesh size $h$, or other discretization parameters. Second, the particular form of the stabilization results in an iterative coupling scheme between the fluid and mechanics problems, which is convergent without the need to introduce other stabilization terms. Thus, we obtain a convergent iterative solver for the obtained large system of equations, which we also demonstrate is parameter-robust. Moreover, this iterative algorithm can be tuned so that we optimize the convergence of the method for one-dimensional problems, and nearly-optimal convergence results in two and three dimensions. 
The proposed approach is presented in detail for the case in which the storage coefficient $1/\beta$ is close to zero, and its extension to a more general case which includes the regime with larger values of $1/\beta$ is demonstrated.  Future work involves applying these methods to other applications which present similar numerical difficulties.

\section*{Acknowledgments}
The work of \'Alvaro P\'e de la Riva, Francisco J. Gaspar and Carmen Rodrigo is supported in part by the Spanish project PID2022-140108NB-I00 (MCIU/AEI/FEDER, UE), and by the DGA (Grupo de referencia APEDIF, ref. E24\_17R). 
The work of Adler and Hu is partially supported by the National Science Foundation (NSF) under grant DMS-2208267. \\
The research of Ludmil Zikatanov 
is based upon work supported by and while serving at the National Science Foundation. 
Any opinion, findings, and conclusions or recommendations expressed in this material are those of the author 
and do not necessarily reflect the views of the National Science Foundation.

\bibliography{mybibfile2}

\end{document}